
\documentclass[12pt,reqno]{amsart}
\usepackage{geometry}                		
\geometry{letterpaper}                   		
\usepackage{graphicx}				
\usepackage{amssymb}
\usepackage{epstopdf}
\usepackage{datetime}
\usepackage{currfile} 
\allowdisplaybreaks

\usepackage{color}

%
\textwidth 6.2in \textheight 8.6in \oddsidemargin=0.2in
\evensidemargin=0.2in \topmargin=-0.2in

\newtheorem{thm}{Theorem}[section]

\newtheorem{lem}[thm]{Lemma}
\newtheorem{prop}[thm]{Proposition}
\theoremstyle{definition}

\theoremstyle{remark}
\newtheorem{rem}{Remark}[section]

\numberwithin{equation}{section}
						

\numberwithin{equation}{section}

\newcommand{\red}[1]{\textcolor{black}{#1}}

\begin{document}

\title[]
      {Selection dynamics for deep neural networks}
       \author{Hailiang  Liu and Peter Markowich }
\address{Iowa State University, Mathematics Department, Ames, IA 50011} \email{hliu@iastate.edu}
\address{Computer, Electrical, Mathematical Sciences and Engineering Division, King Abdullah University of Science and Technology (KAUST), Thuwal, Saudi Arabia.} \email{Peter.Markowich@kaust.edu.sa}

\subjclass[2000]{49K20, 49L20}
\keywords{Deep Learning, Residual Neural Networks, Optimal control, Stability.}


\begin{abstract} 
This paper presents a partial differential equation framework for deep residual neural networks and for the associated learning problem. This is done by carrying out the continuum limits of neural networks with respect to width and depth.  We  study the wellposedness, the large time solution behavior, and the characterization of the steady states of the forward problem. Several useful time-uniform  estimates and stability/instability conditions  are presented. We state and prove optimality conditions for the inverse deep learning problem, using standard variational calculus, the Hamilton-Jacobi-Bellmann equation and the Pontryagin maximum principle.
This serves to establish a mathematical foundation for investigating the algorithmic and theoretical connections between neural networks, PDE theory, variational analysis, optimal control, and deep learning.
\end{abstract}
\maketitle


\section{Introduction}
Deep learning is machine learning using neural networks with many hidden layers, and it \cite{Be09, LBH15, GBC16} has become a primary tool in a wide variety of  practical learning tasks, such as image classification, speech recognition,  driverless cars, or game intelligence.   As such, there is a pressing need to provide a solid mathematical framework to analyze various aspects of deep neural networks. 

Deep Neural Networks (DNN) have been successful in supervised learning, particularly when the relationship between the data and the labels is highly nonlinear. Their depths allow DNNs to express complex data-label relationships since each layer nonlinearly transforms the features and therefore effectively filters the information content.

Linear algebra was appropriate in the age of shallow networks, but is inadequate to explain why deep networks perform better than shallow networks. 
The continuum limit is an effective method for modeling complex discrete structures to facilitate their interpretability. 
The depth continuum limit made a breakthrough by introducing a dynamical system viewpoint and going beyond what  discrete networks can actually do.

Most prior works on the dynamical systems viewpoint of deep learning have focused on algorithm design, architecture improvement using  ODEs to model residual neural networks. However, the ODE description does not  reveal any structure for hidden nodes with respect to width. To fill in this gap, we propose a simple PDE model for DNNs that  represents the continuum limits of deep neural networks with respect to two directions: width and depth. 
One main advantage of the PDE model over the ODE model in \cite{HR17} is its ability to capture the intrinsic selection dynamics among hidden units involved. 

Also, and maybe most importantly, the forward and backward PDE problems can be discretized by numerical methods which lead to different network architectures (with respect to depth and width) than the empirical explicit Euler scheme that was at the basis of the depth continuum process. This allows much more mathematical sophistication with possible rewards in stability, efficiency and speed gains compared to the simple layer-by-layer iteration technique for the forward and backward problems. Moreover, in many applications it is sensible to limit the parameter space by requiring the learning parameters to remain in bounded sets. Then the minimization procedure can be replaced by a control theory approach leading to completely coupled forward-backward PDEs, where the coupling occurs through the optimal controls. Numerical techniques for control theory problems and mean field games can then be used, replacing the classical variational methods (adapted gradient descent) often used for ML in neural networks. We remark that limiting the parameter space is often preferential to classical (Tikhonov-type) regularization procedures of the objective functional.

The main purpose of this paper is to focus on the study of the fundamental mathematical aspects of the PDE formulation. We seek to gain new insight into the dynamics of the forward propagation and the well-posedness of the learning problem, through a study of the PDE that represents the forward propagation dynamics. In this framework the study of the impact of the choice of the activation function on the network dynamics, stability and on the learning problem becomes very apparent and rather straightforward to analyze by classical PDE techniques.

We point out that  the link of deep learning to dynamical system and  optimal control  has attracted increasing attention \cite{CMHRBM18, CHHTB18, EHL19,  HR17,  LCTE18,  LH18, LS17, LZLD17,  SM17}.  An appealing   feature of this approach is that the compositional structure is explicitly taken into account in the time evolution of the dynamical systems, from which novel algorithms and network structures can be designed.  
\subsection{Discrete neural networks}
A neural network can be seen as a recursively defined function $\Phi$ on (a compact domain of) $\mathbb{R}^d$ into $\mathbb{R}^{N_L}$:
$$
 \Phi = L_L \circ F_{L-1} \cdots L_2\circ  F_1\circ  L_1 .
$$
Here $L_k$ is an affine linear map from $\mathbb{R}^{N_{k-1}}$ to $\mathbb{R}^{N_k}$:
$$
L_k(x) = a_k - B_k x,
$$
where $B_k$ are $N_k \times N_{k-1}$ matrices (network weights) and $a_k$ are $N_k$-vectors (network biases). Obviously we have used $N_0 =d$.  $F_k$ is a nonlinear mapping from $\mathbb{R}^{N_k}$ into itself,  given by 
$$
 F_k(y) = {\rm diag}(\sigma(y_1), \cdots, \sigma(y_{N_k})):=:\sigma(y),
$$
where $\sigma:\mathbb{R}\to \mathbb{R}$ is the so called network activation function.  
For details in the setup of neural network functions and their approximation qualities we refer to \cite{BGKP18}.

Residual neural networks are set up slightly differently,  inasmuch they add the current state to the activation term on each level:
$$
z_{l+1}=z_l +\sigma(a_l -B_l z_l). 
$$
\red{Clearly, all the dimensions $N_k$ are assumed to be equal here, i.e., $d=N_0=N_1=\cdots=N_{L-1}=N_L$.}  After rescaling $\sigma$ with an artificial layer width $\tau <<1$, the recursion can be seen as an explicit Euler step of the system of ordinary differential equations:
$$
\frac{d}{dt} z(t) = \sigma(a(t) - B(t)z(t)), t_{l-1}\leq t\leq t_l, 
$$
where $t_l=\tau l$, and $t>0$ corresponds here to an artificially introduced time-like variable representing the depth of the network. 
By now this is a fairly common procedure in DNNs, we refer to \cite{CMHRBM18, CHHTB18, EHL19, HR17, LS17, LZLD17, SM17} and references therein.

We remark that more complicated network evolutions have been considered in the literature \cite{He16a, He16b, LMS16, TSDL18, WGGH18}, we shall comment on a specific example later on in this work.

In practical applications the dimensions $N_0\cdots N_l$ vary significantly from one network layer to the next, so in order not to have a-priori dimensional restrictions it makes sense to pose the above ODE system on an infinitely dimensional space of continuously defined functions. This is the approach which we shall take in this paper. 

Obvious advantages arise. In the space-time continuous case we gain a lot of modeling freedom and highly developed PDE theory and numerics can be applied to analyze and compute geometric aspects of the problem like attractors, sharp fronts etc.  Also the associated inverse problem, namely to determine the weight and bias functions based on given data, can be rephrased easily as a classical optimization and/or control problem.

Our approach allows to apply the very well developed PDE analysis and numerical analysis technology to deep learning neural network problems. We remark that the purpose of the paper is to present a PDE framework for deep learning for neural networks, opening up  ML to sophisticated analytical and numerical techniques which go beyond the current state of the art in ML. For this purpose we focus on the PDE formulation and certain basic mathematical properties while practical examples and more involved mathematical issues will be the subject of subsequent works.

\subsection{Organization}
The paper is organized as follows. We discuss main ingredients of deep learning for the classification problem  and derive the PDE model for the forward propagation and  the optimal control formulation of deep learning in Sect. 2. 
 In Sect. 3, we  study the wellposedness, the large time solution behavior, and the characterization of the steady states for the forward problem. Several useful a priori estimates and stability/instability conditions  are presented.  Sect. 4 is devoted to the back propagation problem and to show how optimal control theory can be applied. We compute the gradient of the final network loss in terms of the network parameter functions, which involves solving both forward and backward problems.   We further develop a control theory based on the Pontryagin maximum principle (PMP) \cite{Po87},  which provides explicit necessary conditions for optimal controls. 
{We finally show that the value function solves an infinite-dimensional Hamilton--Jacobi--Bellman (HJB) partial differential equation. This dynamic programming approach provides the third way to characterize the optimal control parameters. }
 Hence in this work we establish the link between training deep residual neural networks and PDE parameter estimation. The relation provides a general framework for designing, analyzing and training CNNs. Finally, two numerical algorithms for the learning problem, one is gradient based  and another is PMP based,  are presented in Sect. 5.
 \subsection{Related work} 
The approximation properties of deep neural network models are fundamental in machine learning.  For shallow networks, there has been a long history of proving the so-called universal approximation theorem, going back to the 1980s \cite{Cy89, Ho89}. Such universal approximation theorems can also be proved for wide networks,  see \cite{Ba93} for a single layer with sufficient number of  hidden neurons, or deep networks,  see \cite{Lu17, EW18} for networks of finite width with sufficient number of layers.  A systematic study on the network approximation theory has been recently made available \cite{GPEB19}.

Continuous time recurrent networks have been known in 1980s like the one proposed by Almeida  \cite{Al87} and Pineda \cite{Pi87}, and analyzed by LeCun \cite{Le88}. Recently, the interpretation of residual networks by He et al. \cite{He16a} as approximate ODE solvers in \cite{WE17} spurred research in the use of ODEs to deep learning. Based on differential equations, there are studies on the continuum-in-depth  limit of neural networks \cite{LS17, SM17} and on designing network architectures for deep learning \cite{CMHRBM18, CHHTB18, HR17, LZLD17}.  Indeed many state-of-the-art deep network architectures, such as PolyNet \cite{ZLL17} and FractalNet \cite{LMS16}, can be considered as different discretizations of ODEs \cite{Lu18}. Theoretical justification can be found in \cite{MG18}. For models motivated by PDEs we refer the reader to \cite{RH18}.

The dynamical systems approach has also been explored in the direction of training algorithms based on the PMP and the method of successive approximations \cite{LCTE18, LH18}. The connection between back-propagation and optimal control of dynamical systems is known since the earlier works on control and deep learning \cite{AF13, Br75, Le88}. For a rigorous analysis on formulations based on ODEs with random data we refer to \cite{EHL19}.   

The present paper proposes a PDE model which represents a continuum limit of neural networks in both depth and width.  Instead of the analysis of algorithms or architectures, we focus on the mathematical aspects of the formulation itself and develop a wellposedness theory for the forward and backward problems, and further characterize the optimality conditions and value functions using both  ODE (PMP) and PDE (HJB) approaches. 

\section{Mathematical formulation } 
There are three main ingredients of deep learning for the classification problem: (i) forward propagation transforms the input features in a nonlinear way to filter their information;
(ii) Classification is described to predict the class label probabilities using the features at the final output layer (i.e., the output of the forward propagation); 
and (iii) the learning problem is formulated to estimate parameters of the forward propagation and classification to approximate the data-label relation.

\red{We start out by deriving the PDE representing the network architecture, i.e, the forward propagation. }

\subsection{\red{The `Thermodynamic' limit}}
We shall now make the limit of infinite depth and infinite width of residual networks precise in analogy to multi-particle physics we refer to it as the thermodynamic limit process:

At first consider a network of  fixed width $N$ and large depth $L\gg 1$, i.e.,
\begin{align}\label{zz+}
z_{l+1}=z_l +\tau \sigma(a_l -B_l z_l), \quad l=0, 1, \cdots, L,
\end{align}
where $z_l, a_l \in \mathbb{R}^N$, $B_l$ are $N\times N$ matrices, $\sigma$ is the given activation function and $\tau>0$ is the 
(artificially chosen) layer width. We set $T=L\tau$. Note that the rescaling   of the activation function from $\sigma$ to $\tau \sigma$ makes sense since iteration overflow and instability may occur when the iteration is run for $L$ large. That is why we think of $\tau$ as a small `time step' parameter and of $\sigma$ as an $O(1)$-function.  To allow for more general and more interesting results we let the coefficients $a_{l}, B_l$ depend on $\tau$, we denote the $\tau-$dependence by a superscript, i.e., $a_{l}^\tau, B_l^\tau$  and $z_l^\tau$. We now define the piecewise linear functions 
$a^\tau=a^\tau(t)$ and $B^\tau=B^\tau(t)$ interpolating $a_l^\tau$ and, respectively, $B_l^\tau$ so that 
$$
a^\tau(t_l)=a_l^\tau, \; B^\tau(t_l)=B_l^\tau, \; l=0, \cdots, L. 
$$
Then we define the function $z^\tau:=z^\tau(t)$ on $[0, T]$ through the extended recursion 
\begin{subequations}\label{2ab}
\begin{align}
z^\tau(t+\tau)=z^\tau(t) +\tau \sigma(a^\tau(t) -B^\tau(t) z^\tau(t))
\end{align}
for $0\leq t\leq T-\tau$. We prescribe an initial datum for the recursion,
\begin{align}
z^\tau(t)=z_0, \quad \text{for}\; 0\leq t\leq \tau
\end{align}
\end{subequations}
with $z_0\in \mathbb{R}^N$ given. Note that $z^\tau(t_l)=z_l^\tau$ holds. We assume here that $\sigma$ grows almost linearly:
$$
(A_1) \qquad \exists C_1,  C_2>0\;\; \text{such that }\; |\sigma(s)|\leq C_1+C_2s \; \text{for} \; s\in \mathbb{R},
$$
and that $a^\tau, B^\tau$ are bounded independently of $\tau$ in $L^\infty$ as $\tau \to 0$:
$$
(A_2) \qquad \exists C_3\;\; \text{such that }\; \\
\|a^\tau\|_{L^\infty((0, T); \mathbb{R}^N)}+\|B^\tau\|_{L^\infty((0, T); \mathbb{R}^{N\times N})} \leq C_3 \quad \text{for} \; \forall \tau \; \text{small}.
$$
The following convergence result holds.
\red{\begin{thm} \label{thm2.1} Let $(A_1), (A_2)$ hold and assume that 
there are functions $a=a(t), B=B(t)$ such that 
\begin{align*}
& a^\tau(t) \overset{\tau \to 0}{\longrightarrow}  a(t) \quad p.w. a.e. \; \text{on} \; (0, T),\\
& B^\tau(t) \overset{\tau \to 0}{\longrightarrow}   B(t) \quad p.w. a.e. \; \text{on} \; (0, T).
\end{align*}
Then $z^\tau \overset{\tau \to 0}{\longrightarrow}  z$ as $\tau \to 0$ in $C([0, T]; \mathbb{R}^N)$, where 
$z=z(t)$ solves the IVP on $\mathbb{R}^N$: 
\begin{subequations}\label{zz}
\begin{align}
& \dot z=\sigma(a-Bz), \; 0 \leq t \leq T,\\
& z(0)=z_0.
\end{align}
\end{subequations}
\end{thm}
}
\begin{proof} From $(A_1)$ and $(A_2)$ we immediately 
conclude that the solution $z^\tau$ of (\ref{2ab}) is uniformly bounded as $\tau \to 0$. 
Therefore (using recursion) also the difference quotient 
$$
\frac{z^\tau(t+\tau)-z^\tau(t)}{\tau}
$$
is uniformly bounded as $\tau \to 0$ and the Arzela-Ascoli Theorem  
implies that -- upon extraction of a subsequence as $\tau \to 0$ -- 
there exists a function $z\in C([0, T])$ such that 
\begin{align*}
& z^\tau \to z \quad  \text{in} \; C([0, T]),\\
&\frac{z^\tau(t+\tau)-z^\tau(t)}{\tau}  \to \dot z(t)  \quad \text{in} \; \mathcal{D}'[0, T).
\end{align*}
The result then follows by integrating 
$$
\frac{z^\tau(t+\tau)-z^\tau(t)}{\tau}=\sigma(a^\tau(t)-B^\tau(t)z^\tau(t)),
$$
against a function $\phi \in \mathcal{D}[0, T)$ and passing to the limit $\tau \to 0$ (using the Lebesque dominated convergence Theorem on the right). This defines the `large depth' limit process of the residual neural networks.
\end{proof}
Note that the assumptions $(A_1), (A_2)$ and in particular the convergence assumption of Theorem 1 require the coefficients $a_l, B_l$ to depend on the depth of the networks in an appropriate way. 

In order to understand the `infinite width' limit $N\to \infty$ we set out by choosing a dimension $n\in \mathbb{N}$ and an open Jordan set $Y\subset \mathbb{R}^n$. Let $\{Y_1, \cdots, Y_N\}$ be a disjoint partition of $Y$, where each $Y_k$ is an open Jordan set with 
$ \overline{Y}=\cup_{k=1}^N  \overline{Y_k}$, such that 
$$
\int_Y f(v)dv=\sum_{k=1}^N\int_{Y_k} f(v)dv
$$
for any Lebesque integrable function $f: Y \to \mathbb{R}$. 
Now we denote the components of $a=a(t)\in \mathbb{R}^N$, $B=B(t)\in \mathbb{R}^{N\times N}$ by 
\begin{align*}
a(t)=(a^1(t), \cdots, a^N(t)),\quad B(t)=(B^{ij}(t))_{i, j=1, \cdots, N},
\end{align*}
and define the function 
\begin{subequations}\label{3ab}
\begin{align}
& a(y, t)=a^k(t) \quad \text{if}\; y\in Y_k,\\
& b(y, w, t)=\frac{1}{|Y_j|}B^{ij}(t) \quad  \text{if} \in y\in Y_i, w\in Y_j.
\end{align}
\end{subequations}
Clearly $a: Y\times [0, T] \to \mathbb{R}$, $b: Y\times Y \times [0, T] \to \mathbb{R}$ are defined a.e. on their respective domains. 

Consider now the IVP 
\begin{subequations}\label{4ab}
\begin{align}
& f_t(y, t)=\sigma\left(a(y, t)- \int_Y b(y, w, t)f(w, t)dw\right),\\
& f(y, t=0)=f_I(y), \quad y\in Y,
\end{align}
\end{subequations}
where $f_I$ is given by 
$$
f_I(y)=z_0^k, \quad \text{if}\; y\in Y_k. 
$$
Here $z_0=(z^1_0, \cdots, z^N_0)$ is the initial datum for the ODE system of Theorem \ref{thm2.1}. Clearly the (unique) solution of the IVP (\ref{4ab})  is the function 
$$
f(y, t)=z^k(t), \quad \text{if}\; y\in Y_k, 
$$
where $z(t)=(z^1(t), \cdots, z^N(t))$ is the solution of the IVP for the ODE system  (\ref{zz}). 

This clearly shows that the function values of the solution $f$ represent the residual networks while the arguments $y \in Y$  can be interpreted as their labels. 

To understand the limit process $N\to \infty$ and to eliminate technicalities we assume that $Y$ is the unit cube in $\mathbb{R}^d$, i.e., $Y=(0, 1)^d$. We assume $N=M^d$, consider $M\to \infty$ and denote $h=\frac{1}{M}$. Motivated by (\ref{3ab}b) we let the coefficients of the ODE system depend on $h$ and rescale the matrix operator. We consider 
 \begin{subequations}\label{5ab}
\begin{align}
& \dot z_h =\sigma(a_h- h^d B_h z_h),\; t\geq 0,\\
& z_h(t=0)=z_0,
\end{align}
\end{subequations}
 where 
 \begin{align*}
&  z_h =(z_h^1(t), \cdots, z_h^{M^d}(t)), \quad z_0=(z_0^1, \cdots, z_0^{M^d}),\\
&  a_h =(a_h^1, \cdots, a_h^{M^d}), \quad B_h=(b_h^{ij})_{i, j=1,\cdots, M^d}).
\end{align*}
We now partition $Y$ into cube cells $Y_j$ of width $h$, numbered arbitrarily: $Y=\cup_{j=1}^{M^d}Y_j$ and define the function
 \begin{align*}
& a_h(y, t)  =a_h^j(t)  \; \text{if} \; y\in Y_j, \\
& b_h(y, w, t)=b_h^{ij}(t) \quad \text{if}\; y\in Y_i, \; w\in Y_j,\\
&  f_h(y, t) =z_h^j(t), \; \text{if} \; y\in Y_j, \\
&    z_{0, h}(y) =z_0^j, \; \text{if} \; y\in Y_j.
\end{align*} 
Then (\ref{5ab}) is equivalent to 
\begin{subequations}\label{6ab}
\begin{align}
& \partial_t f_h(y, t)=\sigma(a_h(y, t)- \int_Y b_h(y, w, t)f_h(w, t)dw), \quad t>0, \; y\in Y,\\
& f_h(y, t=0)=z_{0, h}(y), \quad y\in Y.
\end{align}
\end{subequations}
We make the following assumption: 
$$
(A_3) \qquad \exists C_4>0 \; \text{such that} \; \; 
\|a_h\|_{L^\infty(Y\times (0, T))} +\|b_h\|_{L^\infty(Y\times Y \times (0, T))} 
+\|z_{0, h}\|_{L^\infty(Y)} \leq C_4
$$
for all $h$ sufficiently small. 

The following result is easy to show. 
\red{\begin{thm}Let $\sigma$ satisfy $(A_1)$, let $(A_3)$ hold and assume: 
\begin{align*}
& a_h(y, t)\overset{h \to 0}{\longrightarrow} a(y, t) \; p.w.a.e. \; \text{in}\; Y\times (0, T), \\
& b_h(y, w, t) \overset{h \to 0}{\longrightarrow}  b(y, w, t) \; p.w.a.e. \; \text{in}\; Y\times Y \times (0, T), \\
& z_{0, h}(y) \overset{h \to 0}{\longrightarrow}  f_I(y) \; p.w.a.e. \; \text{in}\; Y. 
\end{align*}
Then the solution $f_h$ of (\ref{6ab}) satisfies 
$$
f_h \stackrel{h \to 0}{\longrightarrow}  f \quad \text{in} \; \mathcal{D}'(Y\times [0, T)),
$$
where $f$ solves 
\begin{align*}
& f_t(y, t)=\sigma\left(a(y, t)- \int_Y b(y, w, t)f(w, t)dw\right), \; t>0, \; y\in Y,\\
& f(y, t=0)=f_I(y), \quad y\in Y.
\end{align*}
\end{thm}}
The result carries over to much more general domains $Y$ albeit with additional proof technicalities. We also remark that the label set $Y$ and its dimension $n$ can be chosen freely to contribute to the network architecture. 

\subsection{The forward problem}
The forward problem amounts to modeling and simulating the propagation of data. With complex and huge sets of data, fast and accurate forward modeling is a significant step in deep learning.  It should be noted that typically the more parameters the model has, the less well-posed the inverse problem is. 

We first formulate a forward PDE to model the data propagation using residual neural networks \cite{He16a}.  
Let $y\in Y$ denote the neuron identifier variable. Here we assume that $Y$ is a domain in $\mathbb{R}^n$. 
In order to construct a PDE-type model to describe the forward propagation in deep learning, we introduce an artificial time $t\in [0, T]$. The depth of the network is represented by the final time $T$. 
Let $f(y, t)$ be a function describing the residual neural network at time $t$ with neuron identifier $y$, its propagation is governed by the following PDE (of integro--differential type):
\begin{align}\label{main}
\partial_t f(y, t)=\sigma \left( a(y, t)- \int_{z\in Y} b(y, z, t)f(z, t)dz\right),
\end{align}
where $\sigma$ is the nonlinear activation function. Here $b=b(y, z, t)$ is the selection weight function, and $a=a(y, t)$  is the  bias function.  The input learning data set $f(y, t=0)=f_I(y)$ then serves as  the initial data for the above differential equation. One of our objectives in this work is to highlight the relation of the learning problem to this PDE model.

The activation function is typically (piecewise) smooth and monotonically non-decreasing. As commonly used examples, we consider the arctan, the hyperbolic tangent, the sigmoid of form $\frac{1}{1+e^{-s}}$, and the Rectified Linear Unit (ReLU) activations given by 
$$
\sigma(s)=s^+,
$$ 
the positive part of $s$. Our results also apply to other choices such as the leaky ReLu defined by $\sigma(s)=\max\{0.1s, s\}$, and the Elu given by 
$$
\sigma(s)=\left\{ 
\begin{array}{ll}
s & s>0,  \\
\alpha(e^s-1) & s\leq 0.
\end{array}
\right.
$$
The performance of these activation functions varies on different tasks and data sets \cite{Ra17} and it typically requires a parameter to be turned. Thus, the ReLU remains one of the popular activation functions due to its simplicity and reliability \cite{GBB11, LBH15, NH10}. 

The network output function  at final time $T$ is given by 
\begin{align}\label{oty}
O_T(y) := \int W(y, z)f(z, T)dz +\mu(y),
\end{align}
where $W$ and $\mu$ are weight and bias functions to be determined later.  

\subsection{The learning problem} \label{sec2.2}
In order to complete the learning problem, we need to define a prediction function by 
$$
C^{\rm pre}=h\left(O_T(y)\right).
$$
One of the popular choices for $h$ is the logistic regression function, 
$$
h(\xi)=e^\xi/(1+e^{\xi}).
$$
The goal of the learning problem is to estimate the parameters of the forward propagation (i.e., $a$ and $b$ and the classifier $W$ and $\mu$) from an  observed label function $C=C(y)$, so that the DNN accurately approximates the data-label relation for the training data and generalizes to new unlabeled data. 
The forward operator is highly nonlinear, and the learning problem most often does not fulfill Hadamard's postulate of well-posedness.

As we show below, the learning problem can be cast as a dynamic control problem, which provides new opportunities for applying theoretical and computational techniques from parameter estimation to deep learning problems. 

We phrase learning as an optimization problem 
\begin{subequations}\label{op}
\begin{align}
& \min J(C^{\rm pre}, C) \\
& \text{such that }\; \partial_t f(y, t)=\sigma\left(a(y, t)-\int_Y b(y, z, t)f(z, t)dz\right), \quad t\in (0, T],
\end{align}
\end{subequations}
where $J$ is a suitable choice of objective/loss function characterizing the difference between the synthetic data $C^{\rm pre}$ generated by the current (and inaccurate) model parameter $m=(a, b, W, \mu)$ and the observable true label $C$. This is a data-fitting approach, similar to many other inverse problems that are formulated as PDE-constrained optimization.

The optimization problem in (\ref{op}) is challenging for several reasons. Firstly, it is a high-dimensional non-convex optimization problem, and therefore one has to be content with local minima. Secondly, the computational costs per example are high, and the number of examples is large. Thirdly, very deep architectures are prone to problems such as vanishing and exploding gradients  that may occur when the discrete forward (or backward) propagation is unstable.

\subsection{The choice of the objective function and regularization} \label{sec2.3}
Typically the loss function $J$ is chosen to be convex in its first argument and measures the quality of the predicted class label probabilities.  A typical choice is 
$$
J(C^{\rm pre}, C)=\frac{1}{2}\int_Y|C^{\rm pre}(y)-C(y)|^2dy.
$$
For classification the cross entropy loss is often used to measure the model performance \cite{RVCPV04, MV14}.

To control noise and other undesirable effects occurring in inverse problems, one often adds a regularization term so that 
\begin{subequations}\label{op+}
\begin{align}
& \min J(C^{\rm pre}, C) +\lambda R(m),
\end{align}
\end{subequations}
where the  regularizer $R$ is a convex penalty functional,
and the parameter $\lambda > 0$ balances between minimizing the data fit and noise control. 
 Choosing an ``optimal" regularizer, $R$, and regularization parameter $\lambda$ is both crucial and nontrivial; see, e.g., \cite{Bi06, GBC16}.

\section{Wellposedness of the forward problem}  In order to identify useful structures of the deep learning problem, we first make some assumptions on the parameters with which stability of the forward problem can be studied. 

\subsection{A general existence result}
Most of our results will be obtained under the following:

{\bf Assumption 1}.  $Y$ is a domain in $\mathbb{R}^n$ and $0<T<\infty$. The propagation operator 
$$
\sigma(S[f]) \; \text{with} \; S[f]=a(y, t)- \int_{z\in Y} b(y, z, t)f(z, t)dz
$$
satisfies: 
\begin{itemize}
\item $\sigma$ is globally Lipschitz continuous with $\sigma(0)=0$ or $|Y|<\infty$. 
\item $a\in L^1((0, T), L^2(Y))$. 
\item $b\in L^1((0, T), L^2(Y\times Y))$ 
\end{itemize}
Note that in the course of this paper we assume that the integral operator is bounded on $L^2(Y)$ a.e.  in  $t$. This is clearly a restriction, but it represents the most common state of the art in network propagation applications. More general (differential or pseudo-differential) operators may be considered but this gives an entirely different flavor to the following analysis and is the topic of further work.

The above conditions are sufficient to prove the following theorem of existence and uniqueness by iteration. 
\begin{thm}We suppose that Assumption 1 holds. Then,  \\
1. for any initial  function $f_I \in L^2(Y)$ there exists a solution in $C([0, T]; L^2(Y))$ 
which solves (\ref{main}) with $f(0,\cdot)=f_I$. Furthermore,\\
2. for any two  solutions $f_1, f_2$ of (\ref{main}) in $C((0, T); L^2(Y))$, 
one has the following stability property:\\
\begin{align}\label{diff}
\forall t\in [0,T], \; \|f_1(t,\cdot)- f_2(t,\cdot)\|_{L^2(Y)} \leq e^{Lt}\|f_1(0,\cdot)-f_2(0,\cdot)\|_{L^2(Y)}, 
\end{align}
 for some $L>0$.  In particular, if $f_1(\cdot, 0)=f_2(\cdot, 0)$, then $f_1(\cdot, t)=f_2(\cdot, t)$ for all  $t>0$, so that uniqueness holds.
\end{thm}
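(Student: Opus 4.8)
The plan is to treat \eqref{main} not as a genuine PDE but as an ordinary differential equation in the Banach space $L^2(Y)$, since no spatial derivatives appear: the only $y$-coupling is through the integral operator $f\mapsto\int_Y b(y,z,t)f(z,t)\,dz$. First I would recast the problem in integral (mild) form,
\begin{equation*}
f(y,t) = f_I(y) + \int_0^t \sigma\!\left(a(y,s) - \int_Y b(y,z,s) f(z,s)\,dz\right) ds =: \mathcal{T}[f](y,t),
\end{equation*}
and work in $X = C([0,T];L^2(Y))$. The first task is to check that $\mathcal{T}$ maps $X$ into itself. The key pointwise-in-time estimate is that, by Cauchy--Schwarz in $z$, $\|\int_Y b(\cdot,z,s)f(z,s)\,dz\|_{L^2(Y)} \le \beta(s)\,\|f(\cdot,s)\|_{L^2(Y)}$ with $\beta(s):=\|b(\cdot,\cdot,s)\|_{L^2(Y\times Y)}$; combined with the Lipschitz bound on $\sigma$ (and $\sigma(0)=0$, or $|Y|<\infty$ to absorb the constant $\sigma(0)$ in $L^2(Y)$), this shows $\sigma(S[f](\cdot,s))\in L^2(Y)$ and that the time integral lands in $C([0,T];L^2(Y))$. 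Continuity in $t$ for a.e.\ $y$, hence membership in $L^2(Y;C[0,T])$, then follows from absolute continuity of the time integral.

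For the stability estimate \eqref{diff} (which immediately yields uniqueness) I would use an energy estimate rather than the fixed point. Writing $g=f_1-f_2$ and differentiating $\tfrac12\|g(\cdot,t)\|_{L^2(Y)}^2$, the equation gives
\begin{equation*}
\frac{1}{2}\frac{d}{dt}\|g\|_{L^2(Y)}^2 = \int_Y g\,[\sigma(S[f_1]) - \sigma(S[f_2])]\,dy \le L_\sigma\,\beta(t)\,\|g\|_{L^2(Y)}^2,
\end{equation*}
where I have used Cauchy--Schwarz, the Lipschitz constant $L_\sigma$ of $\sigma$, and the same bound on the integral operator as above. Grönwall's inequality then gives $\|g(\cdot,t)\|_{L^2(Y)} \le \exp\!\big(L_\sigma\int_0^t\beta(s)\,ds\big)\,\|g(\cdot,0)\|_{L^2(Y)}$, which is \eqref{diff} (and reduces to the stated $e^{Lt}$ when $\beta$ is bounded); in particular equal initial data force $g\equiv 0$.

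Existence (claim 1) I would obtain by Picard iteration: set $f^{(0)}=f_I$ and $f^{(k+1)}=\mathcal{T}[f^{(k)}]$. The same two ingredients give the recursion $d_k(t)\le L_\sigma\int_0^t\beta(s)\,d_{k-1}(s)\,ds$ for $d_k(t):=\|f^{(k+1)}(\cdot,t)-f^{(k)}(\cdot,t)\|_{L^2(Y)}$, which iterates to $d_k(t)\le C\,(L_\sigma B(t))^k/k!$ with $B(t)=\int_0^t\beta(s)\,ds$. The factorial makes $\sum_k d_k$ converge uniformly on $[0,T]$, so $f^{(k)}$ is Cauchy in $X$; the limit solves the integral equation and hence \eqref{main} for a.e.\ $y$ and every $t$. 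Because the growth bound is only exponential there is no finite-time blow-up, so the solution extends to the whole interval on which the data are defined.

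The step I expect to require the most care is reconciling the order of the norms: the hypotheses place $a,b$ in $L^2(Y;L^1(0,T))$ and $L^2(Y\times Y;L^1(0,T))$ (time integrated \emph{inside}), whereas both the Grönwall and the Picard estimates naturally demand finiteness of $\int_0^T\beta(s)\,ds = \|b\|_{L^1(0,T;L^2(Y\times Y))}$ (time integrated \emph{outside}). Minkowski's integral inequality bridges exactly this gap, giving $\|b\|_{L^1(0,T;L^2(Y\times Y))}\le\|b\|_{L^2(Y\times Y;L^1(0,T))}<\infty$, and likewise for $a$. A secondary point worth flagging is that $\sigma$ is merely Lipschitz (e.g.\ ReLU), so it cannot be differentiated; fortunately every estimate above uses only the Lipschitz bound, so this causes no difficulty.
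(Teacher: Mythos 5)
Your overall architecture is essentially the paper's: a Picard/mild-solution iteration in which the integral operator is controlled through the Hilbert--Schmidt bound $\|B_b(t)\|_{L^2(Y)\to L^2(Y)}\le \|b(\cdot,\cdot,t)\|_{L^2(Y\times Y)}$. Three differences are worth recording. You work in $C([0,T];L^2(Y))$ rather than the paper's $Q=L^2(Y;C[0,T])$, which is harmless since $L^2(Y;C[0,T])$ embeds continuously into $C([0,T];L^2(Y))$ (though the theorem asserts existence in the smaller space $Q$, so strictly you should retain the paper's norm or check the embedding in the other direction at the level of the iterates, as the paper does). Your factorial bound $d_k(t)\le C\,(L_\sigma B(t))^k/k!$ gives convergence on all of $[0,T]$ in one stroke, which is cleaner than the paper's small-time contraction (requiring $C_2(T)<1$) followed by a continuation argument. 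And you actually supply a Gr\"onwall/energy proof of the stability estimate \eqref{diff}, which the published proof omits entirely (only existence is argued there); using only the Lipschitz constant of $\sigma$, rather than the paper's informal $\sup|\sigma'|$, is also more careful, since ReLU is not differentiable.

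There is, however, a genuine error in the step you yourself flagged as delicate: Minkowski's integral inequality goes the \emph{opposite} way. It states $\bigl\|\int_0^T|b(\cdot,\cdot,t)|\,dt\bigr\|_{L^2(Y\times Y)}\le\int_0^T\|b(\cdot,\cdot,t)\|_{L^2(Y\times Y)}\,dt$, i.e.\ $\|b\|_{L^2(Y\times Y;L^1(0,T))}\le\|b\|_{L^1(0,T;L^2(Y\times Y))}$ --- the larger exponent moves \emph{inside} the time integral, not outside. Your claimed inequality is false in general: take $Y\times Y$ of measure $1$, $T=1$, partition $Y\times Y$ into $N$ sets $E_k$ of measure $1/N$ and $[0,1]$ into intervals $I_k$ of length $1/N$, and set $b=N\,\mathbf{1}_{E_k\times I_k}$ on each block; then $\|b\|_{L^2(Y\times Y;L^1(0,T))}=1$ while $\|b\|_{L^1(0,T;L^2(Y\times Y))}=\sqrt{N}$. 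Consequently, under the literal Assumption 1 the quantity $\int_0^T\beta(s)\,ds$ driving both your Gr\"onwall and Picard estimates need not be finite. To be fair, the paper has the same defect: its own constants $C_1$ and $C_2(T)$ are precisely $L^1$-in-time of $L^2$-in-space norms, taken finite without comment, so the published argument tacitly requires $a\in L^1(0,T;L^2(Y))$ and $b\in L^1(0,T;L^2(Y\times Y))$. The correct repair is to read (or restate) Assumption 1 with the norms in that order, not to invoke Minkowski. A secondary point: with $\beta$ merely integrable, $\exp\bigl(L_\sigma\int_0^t\beta(s)\,ds\bigr)$ cannot in general be dominated by $e^{Lt}$ for any fixed $L$ (e.g.\ if $\beta(s)\sim s^{-1/2}$ near $0$), so your parenthetical caveat that the stated form of \eqref{diff} needs bounded $\beta$ is the honest version of the theorem's conclusion.
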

\begin{proof} Existence follows from the recursive scheme
\begin{align*}
& f^0(y, t)=f_I(y),\\
& \partial_t f^{n+1} = \sigma(S[f^{n}]), \quad f^{n+1}(0) = f_I.
\end{align*}
For $f^n \in C((0,T); L^2(Y)):=Q$, $f^{n+1}$ is well-defined in the same space by 
$$
 f^{n+1}(t)=f_0+\int_0^t  \sigma(S[f^{n}])(\tau)d\tau,
$$
which in the $Q$ norm is bounded by 
\begin{align*}
&   \|f_I\|_{L^2} + \sup |\sigma'(\cdot)|  \left( \int_0^T (\|a(\cdot, \tau)\|_{L^2(Y)}d\tau  + 
\int_0^T \|b(\cdot, \cdot, \tau)\|_{L^2(Y\times Y)}d\tau \|f^n\|_{Q} +C_0T\right) \\
& \leq  \|f_I\|_{L^2}+C_0T + C_1 + C_2(T) \|f^n\|_{Q}
\end{align*}
where $C_0=|\sigma(0)||Y|$ if $\sigma(0)\not=0$, $C_1=\sup|\sigma'(\cdot)|  \|a\|_{L^1((0, T), L^2(Y))}$,  and 
$$
C_2(T) =\sup|\sigma'(\cdot)| \int_0^T \|b(\cdot, \cdot, \tau)\|_{L^2(Y\times Y)}d\tau.
$$
 Note that we used the well-known fact that the operator norm in $L^2$ of the integral operator with the kernel $b$ equals the norm of $b$ in $L^2$, that is $\|b\|_{L^2(Y\times Y)}$. By using the fact that $C_2(s)\to 0$ if $s\to 0$, we get an upper bound for $\{f^n\}$ uniformly in $n$:
$$
\|f^n\|_Q\leq \frac{\|f_I\|_{L^2}+C_0T + C_1 }{1-C_2(T)}
$$ 
if $T$ is suitably small such that $C_2(T)<1$. Then, by studying $f^{n+1}-f^n$ via 
$$
f^{n+1}-f^n= - \int_0^t \sigma'(\cdot)\int_Y b(y, z, \tau)(f^n-f^{n-1})dzd\tau, 
$$
we have 
$$
\|f^{n+1}-f^n\|_Q \leq C_2(T) \|f^n-f^{n-1}\|_Q.
$$
Thus one can conclude that $\{f^n\}_{n\in \mathbb{N}}$  is a Cauchy sequence in 
$Q$, which converges towards a solution $f$ of the equation (\ref{main}) for $C_2(T)<1$. 
Global existence for any $T$ then follows from a continuity 
argument by extending the local solution, proceeding as for the uniform estimate on $\{f^n\}_{n\in \mathbb{N}}$. 

\end{proof}

\subsection{Large time asymptotics, stability of steady states} \label{sec3.2} Clearly, it is important to study the forward dynamics of the residual neural network problem. Here we begin the discussion with the analysis of steady states and stability.

Note that the time horizon T is usually finite but often large in typical deep learning applications so that long time stability questions do become important, also for the design and implementation of appropriate space-time discretizations. Also it is important to realize that from a standpoint of network architecture design mildly stable evolutions are highly desirable while strongly stable ones lead to the cancellation or overdamping of fine structures in the network, which is a highly undesirable feature. Thus qualitative and quantitative stability information is needed, which can be used to impose additional constraints in the deep learning step, see e.g., \cite{HR17}.   

For simplicity, we first assume the forward propagation operator  to be autonomous. That is, 
$$
a=a(y), \quad b=b(y, z).
$$
At the first glance this looks like an oversimplification because the output parameter functions $a$ and $b$ of the deep learning optimization will - in realistic applications - be time-varying. However, it is clear that we cannot hope to understand the evolution dynamics of the non-autonomous problem without understanding the autonomous one. We shall show that even the latter gives rise to highly non-trivial and unexpected evolutive phenomena. Also, it is clear that long-time stability considerations of the autonomous problem are highly relevant for long time stability issues of the non-autonomous case (and for time-local stability of its time-discretisations). This will be detailed later on, after we have reached an understanding of the autonomous dynamics.

We make two basic assumptions here:\\
($A_1$) $\sigma$ is globally Lipschitz and \red{non-decreasing} on $\mathbb{R}$, $s\sigma(s) \geq 0$ on $\mathbb{R}$,  
but $\sigma'(0)>0$ and $\sigma(0)=0$,  \\
($A_2$) $a\in L^2(Y)$, $b\in L^2(Y\times Y)$.

Then the forward problem becomes 
\begin{subequations}\label{f1}
\begin{align}
 & f_t =\sigma(a-Bf),\\
 & f(y, 0)=f_I(y).
 \end{align}
 \end{subequations}
Here the operator defined by 
$$
(Bf)(y)=\int_Y b(y, z)f(z)dz
$$
from $L^2(Y)$ into $L^2(Y)$ is  Hilbert-Schmidt and consequently compact.
 
 Let $f_\infty \in L^2(Y)$ be a steady state, \red{i.e., $Bf_\infty =a$}. Note that steady states exist if and only if 
 $a\in R(B)$ ($=$ Range of $B$), and they are non-unique  if and only if $N(B)$ ($=$
 $\text{Nullspace}$ of $B$) is non-trivial. To study the stability of $f_\infty$, we first linearize 
 (\ref{f1}) at $f_\infty$, so that its perturbation $w$  satisfies 
 \begin{align*}
 & w_t =-\sigma'(0) Bw,\\
 & w(t=0)=w_I.
 \end{align*} 
 We obtain 
 $$
 w(t)=e^{-\sigma'(0)Bt}w_I.
 $$
 For an eigenvalue-eigenfunction pair  $(\omega, \phi)$ of $B$ we obviously have $e^{-\sigma'(0)\omega t} \phi$ as a solution of the linearized IVP.
 Therefore if the spectrum of $B$ contains an eigenvalue with negative real part, exponential instability holds for the linearized problem. 
 If an eigenvalue of $B$ with zero real part exists,  asymptotic stability for the linearized problem does not hold. 
 

To obtain stability for the linearized problem, we can impose  
$$
(Bv, v)_{L^2(Y)}=(B_sv, v)_{L^2(Y)} \geq 0 \quad \forall v\in L^2(Y), 
$$  
where $B_s=\frac{1}{2}(B+B^\top)$ is the symmetric part of $B$. 


Assume now that $B^\top=B$ and that all eigenvalues of $B$ are positive (i.e.,  $0$ is a spectral value but not an eigenvalue!).  Then asymptotic stability can be concluded from the solution representation 
  \begin{align*}
w(t)=\sum_{l=1}^\infty e^{-\sigma'(0) \omega_l t} (w_I, \phi_l)_{L^2(Y)}\phi_l,
 \end{align*} 
 where $\{\phi_l\}$ is the C.O.N.S (complete orthonormal system) of eigenfunctions. 
 
 If $\omega=0$ is an eigenvalue then stability (but not asymptotic stability) holds in the symmetric case. 
\red{\begin{rem}
Note that the above comments remain true in the non-autonomous case when  
$a_\infty:=\lim a(t)$, $b_\infty:=\lim b(t)$ exist (in an appropriate sense) and when $a, b$ are replaced by $a_\infty$ and,  respectively, $b_\infty$.  
\end{rem}
}
We now turn to discuss nonlinear stability for the forward propagation problem using Lyapunov functionals. 

Set
$$
u:=a-Bf,
$$
so that $u$ solves  
 \begin{align*}
 & u_t =- B\sigma(u),\\
 & u(t=0)=u_I:=a-Bf_I.
 \end{align*} 
In order to recover $f$ from $u$ we use the equation $f_t=\sigma(u)$ so that 
  \begin{align}\label{ff}
 f(y, t)=f_I(y)+\int_0^t \sigma(u(y, s))ds.
  \end{align}
 Multiply the u-equation by $\sigma(u)$ so that 
 $$
 \frac{d}{dt} \int_Y \Sigma(u)dy =-\int_Y (B_s \sigma(u), \sigma(u)) \leq 0, 
 $$
 (again assuming that $B_s \geq 0$).  This  gives after integration 
 $$
 \int_Y \Sigma(u(y, t))dy \leq  \int_Y \Sigma(u_I(y))dy,
 $$
 where 
 $$
 \Sigma(s)=\int_0^s \sigma(\xi)d\xi. 
 $$
 In order to obtain estimates for $f$ we distinguish two cases.  Firstly,   we assume that
 \begin{align}\label{ss+}
 \sigma(s)s \geq 0 \quad \text{for}\; s\not=0,
 \end{align}
 and  $|\sigma(s)|\geq C_1|s|$ for $|s|\geq C_2$, such that 
 $$
 \Sigma(s) \geq C_3 s^2
 $$
 for $|s|\geq C_4$. This allows to estimate $Bf$ the following way:
 \begin{align*}
 \int_Y|B f|^2dy \leq 2\int_Y |a|^2dy +2\int_Y |a-Bf|^2 dy \leq C \quad \forall t>0.   
 \end{align*}
 \red{Secondly, if} only $|\sigma(s)| \geq C_1$ for $|s|\geq C_2$, then
 $$
 \int_Y |Bf(t)| dy \leq C \quad \forall t>0
 $$
 follows.
 
 Similarly, from the equation $f_t=\sigma(u)$ with $u=a-Bf$ we conclude 
 $$
 \int_Y f_t Bf dy -\frac{d}{dt} \int_Y af dy =-\int_Y \sigma(u)u dy \leq 0,
 $$
 because of (\ref{ss+}).  Assume now that $B^\top=B$ and $B$ non-negative,  we then have 
 $$
 \frac{1}{2}\frac{d}{dt} \|B^{1/2}f\|^2_{L^2(Y)} -\frac{d}{dt}(a, f)_{L^2(Y)} =-\int_Y \sigma(u)u dy.
 $$ 
Thus
$$
\frac{1}{2} \|B^{1/2}f(t)\|^2_{L^2(Y)} - (a, f)_{L^2(Y)}
$$
is monotonically decreasing and bounded from below, admitting a limit as $t\to \infty$. 

Also 
$$
0\leq \int_0^\infty \int_Y \sigma(u(y, s))u(y, s)dyds <\infty
$$
and assuming $a\in R(B^{1/2})$, 
\begin{align*}
\|B^{1/2}f(t)\|^2_{L^2(Y)} & \leq K+2(a, f(t))_{L^2(Y)} \\
& \leq K+2(B^{-1/2}_s a, B^{1/2}_s f(t))_{L^2(Y)} \leq K+C\|B_s^{1/2}f(t)\|_{L^2(Y)}.
\end{align*}
Thus 
$$
\|B^{1/2}f(t)\|^2_{L^2(Y)} \leq K_1 \quad \forall t>0.
$$
Again, the projection of $f$ onto $N(B)$ is not controlled by this estimate. 

To collect facts, we have the following time-uniform estimates 
\begin{thm}
1) 
If  $B_s \geq0$, then for any $t>0$, we have \\
\begin{align*}
& (i)  \qquad \int_Y \Sigma(a-Bf(t))(y))dy \leq \int_Y \Sigma(a-Bf_I)(y))dy,  \\
\text{where} \; \; \Sigma' =\sigma, &  \; \text{and}  \\
& (ii) \qquad \int_0^\infty \|B_s^{1/2}\sigma(a-Bf(s))\|^2_{L^2(Y)}ds <\infty,  
\end{align*}
which implies that $B_s^{1/2}f_t:=B_s^{1/2}\sigma(u)\in L^2(Y\times (0, \infty))$. 
 \\
2) If $s\sigma(s) \geq 0$, $B^\top=B, \; B \geq 0$ and $a\in {R(B^{1/2})}$,  then for all $t>0$\\
\begin{align*}
& (iii)  \qquad  \|B^{1/2}f(t)\|_{L^2(Y)} +|(a, f(t))_{L^2(Y)}| \leq K. \\
& (iv) \qquad \int_0^\infty \int_Y \sigma(a-Bf(s))(y))\cdot (a-Bf(s))(y))dyds <\infty.
\end{align*}

\end{thm}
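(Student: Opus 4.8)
The plan is to assemble both families of estimates from a single substitution that turns the quasilinear equation into a gradient-type flow. Throughout set $u := a - Bf$; differentiating and using (\ref{f1}) gives $u_t = -B\sigma(u)$ with $u(0) = u_I = a - Bf_I$, while $f$ is recovered through $f_t = \sigma(u)$, that is (\ref{ff}). The whole theorem is then a matter of testing these two relations against the right multipliers and integrating in time; the only genuine work is showing that the resulting Lyapunov functionals are bounded below, so that their time integrals converge.

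For part 1 I would test the $u$-equation with $\sigma(u)$. Since $\Sigma' = \sigma$, the time derivative produces $\frac{d}{dt}\int_Y \Sigma(u)\,dy$, while the right-hand side gives $-(B\sigma(u),\sigma(u))_{L^2(Y)} = -(B_s\sigma(u),\sigma(u))_{L^2(Y)} = -\|B_s^{1/2}\sigma(u)\|_{L^2(Y)}^2$, the antisymmetric part of $B$ dropping out of the quadratic form and $B_s^{1/2}$ existing because $B_s\geq 0$ is self-adjoint. Thus
\begin{align*}
\frac{d}{dt}\int_Y \Sigma(u(y,t))\,dy = -\|B_s^{1/2}\sigma(u(\cdot,t))\|_{L^2(Y)}^2 \leq 0,
\end{align*}
which is exactly (i) after integration on $[0,t]$. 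Integrating instead on $[0,\infty)$ yields $\int_0^\infty \|B_s^{1/2}\sigma(u(\cdot,s))\|_{L^2(Y)}^2\,ds = \int_Y \Sigma(u_I)\,dy - \lim_{t\to\infty}\int_Y\Sigma(u(\cdot,t))\,dy$, which is finite as soon as $\int_Y\Sigma(u)\,dy$ stays bounded below. For the monotone activations under consideration one has $s\sigma(s)\geq 0$, hence $\Sigma(s) = \int_0^s\sigma(\xi)\,d\xi \geq 0$, so the limit is nonnegative and (ii) follows; the last assertion $B_s^{1/2}f_t = B_s^{1/2}\sigma(u)\in L^2(Y\times(0,\infty))$ is merely a restatement of (ii) through $f_t = \sigma(u)$.

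For part 2 I would instead test $f_t = \sigma(u)$ against $Bf - a$. Using $u = a - Bf$ the right-hand side becomes $-\int_Y \sigma(u)u\,dy \leq 0$ by $s\sigma(s)\geq 0$; on the left, symmetry $B^\top = B$ turns $(Bf,f_t)$ into $\tfrac12\frac{d}{dt}\|B^{1/2}f\|_{L^2(Y)}^2$ and, $a$ being time-independent, $-(a,f_t) = -\frac{d}{dt}(a,f)$. Hence
\begin{align*}
\frac{d}{dt}\left[\tfrac12\|B^{1/2}f(t)\|_{L^2(Y)}^2 - (a,f(t))_{L^2(Y)}\right] = -\int_Y \sigma(u)u\,dy \leq 0,
\end{align*}
so this functional is nonincreasing. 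The crux is a uniform lower bound. Expanding in the complete orthonormal eigensystem $\{\phi_l\}$ of $B$ with eigenvalues $\lambda_l\geq 0$ and writing $f_l = (f,\phi_l)$, $a_l = (a,\phi_l)$, each term $\tfrac{\lambda_l}{2}f_l^2 - a_lf_l$ with $\lambda_l>0$ has minimal value $-\tfrac{a_l^2}{2\lambda_l}$, while $a\in R(B^{1/2})$ forces $a_l = 0$ whenever $\lambda_l = 0$, so the null-space contributions vanish and $\tfrac12\|B^{1/2}f\|^2 - (a,f) \geq -\tfrac12\sum_{\lambda_l>0}\tfrac{a_l^2}{\lambda_l} = -\tfrac12\|B^{-1/2}a\|_{L^2(Y)}^2 > -\infty$. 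Together with monotonicity the functional thus has a finite limit as $t\to\infty$. To extract (iii) I would then use $(a,f)\leq \|B^{-1/2}a\|\,\|B^{1/2}f\|$ in $\|B^{1/2}f\|^2 \leq 2K_0 + 2(a,f)$ (with $K_0$ the initial value of the functional), getting a scalar quadratic inequality $X^2 - 2CX - 2K_0 \leq 0$ for $X = \|B^{1/2}f(t)\|$ that bounds $X$, and hence $|(a,f)|$, uniformly in $t$; (iv) is then immediate upon integrating the displayed identity on $[0,\infty)$, since the integrand is nonnegative and the left side telescopes to the initial value minus the finite limit.

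The main obstacle is precisely the lower bound in part 2: without the hypothesis pinning $a$ to the range of $B^{1/2}$, the component of $a$ in the kernel of $B$ produces an uncontrolled linear term $-\sum_{\lambda_l=0}a_lf_l(t)$ and the functional need not be bounded below. Everything else is bookkeeping once the substitution $u = a - Bf$ is in place; one should only be mildly careful that $B_s^{1/2}$ and $B^{-1/2}$ are interpreted on the appropriate spectral subspaces, and that the monotonicity $s\sigma(s)\geq 0$ of the listed activations is what guarantees both $\Sigma\geq 0$ in part 1 and the correct sign of the dissipation terms in part 2.
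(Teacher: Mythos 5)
Your proposal is correct and follows essentially the same route as the paper: the substitution $u=a-Bf$ with $u_t=-B\sigma(u)$, testing against $\sigma(u)$ to get the decay of $\int_Y\Sigma(u)\,dy$ for part 1, and the Lyapunov functional $\tfrac12\|B^{1/2}f\|_{L^2(Y)}^2-(a,f)_{L^2(Y)}$ with the spectral lower bound via $a\in R(B^{1/2})$ and the quadratic inequality in $\|B^{1/2}f(t)\|$ for part 2. If anything, you are slightly more explicit than the paper in flagging that (ii) needs $\Sigma$ bounded below (via $s\sigma(s)\geq 0$, which holds for the non-decreasing activations with $\sigma(0)=0$ considered there) and in spelling out the eigenfunction expansion that the paper only sketches.
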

  \subsection{Characterization of steady states} 
 Note that  the equation 
 $$
  u_t=-B\sigma(u)
 $$
 may have other equilibria  than $u_e=0$. In fact every $u_e$ such that $\sigma(u_e)\in N(B)$ is an equilibrium. But $u_e=0$ is the only
one which may correspond to the equilibrium $f=f_\infty$ of the equation (\ref{f1}) (it does if only if $a\in R(B)$).
 Note that 
 $$
 u(t)=u_I -B\int_0^t \sigma(u(s))ds \Rightarrow   u(t)-u_I\in R(B).
 $$
 Since $u_I=a-Bf_I$ we have $u_I-a\in R(B)$ and $u(t)-a\in R(B)$. 
 Consider $0\not= u_e\in L^2(Y)$ such that $\sigma(u_e)\in N(B)$. Then the corresponding solution of the $f-$equation is 
 $$
 f(y, t)=f_I+t\sigma(u_e), \quad u_e=a-Bf_I, 
 $$  
 if $u_e-a\in R(B)$.  Clearly the linearly increasing component $t\sigma(u_e)\in N(B)$ is not seen by the time-uniform estimates of Theorem 3.2. 

To consider an example  pick $\phi\in L^2(Y)$, $\|\phi\|_{L^2}=1$. Define $u_e=\phi$, compute $\sigma_e=\sigma(\phi)$. Now choose 
$\psi \in \{\sigma_e\}^\bot$ and define the rank one operator 
$$
(Bf)(y):=\int_Y f(z)\psi(z)dz \phi(y).
$$ 
Clearly $u_e=\phi$ is an equilibrium of $ u_t=-B\sigma(u)$.
Also $u_e=\phi \in R(B)$. Now let $a=\alpha \phi \in R(B)$ ( $\alpha\in \mathbb{R}$ given) and choose $f_I\in L^2(Y)$  such that 
$$
\int_Y f_I \psi dy=\alpha-1.
$$
Then 
$$
f(t)=f_I +t\sigma(\phi)
$$
solves (\ref{f1}). 
\begin{lem}  If $B$ is symmetric, $\sigma(s)s \geq 0$ for all $s\in \mathbb{R}$ then every equilibrium $u_e$ in $R(B)$ satisfies $u_e\sigma(u_e)=0$. 
\end{lem}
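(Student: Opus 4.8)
The plan is to turn the equilibrium condition into an orthogonality relation using the symmetry of $B$, and then to upgrade a vanishing $L^2$ inner product into pointwise vanishing by means of the sign hypothesis on $\sigma$. First I would record what ``equilibrium'' means here: $u_e$ is an equilibrium of $\dot u=-B\sigma(u)$ exactly when $B\sigma(u_e)=0$, i.e. $\sigma(u_e)\in N(B)$. This is the only input coming from the dynamics; everything else is functional analysis on $L^2(Y)$.

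Next I would use that $B$ is symmetric and, being Hilbert--Schmidt, bounded and self-adjoint on $L^2(Y)$, so that the standard range--nullspace identity $\overline{R(B)}=N(B^\top)^\bot=N(B)^\bot$ holds. In particular $R(B)\subseteq N(B)^\bot$, so every element of $R(B)$ is orthogonal in $L^2(Y)$ to every element of $N(B)$. Applying this to $u_e\in R(B)$ and $\sigma(u_e)\in N(B)$ yields
$$
(u_e,\sigma(u_e))_{L^2(Y)}=\int_Y u_e(y)\,\sigma(u_e(y))\,dy=0 .
$$
The hypothesis $\sigma(s)s\geq 0$ for all $s$ says the integrand $u_e(y)\sigma(u_e(y))$ is nonnegative a.e.\ on $Y$, and a nonnegative measurable function with vanishing integral must be zero almost everywhere. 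Hence $u_e(y)\sigma(u_e(y))=0$ for a.e.\ $y$, that is $u_e\sigma(u_e)=0$, as claimed.

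I expect the proof to be short, with the only point needing care being the identity $\overline{R(B)}=N(B)^\bot$: this is precisely where symmetry of $B$ enters, and I would emphasize that no closedness of $R(B)$ is required, since $u_e\in R(B)\subseteq\overline{R(B)}=N(B)^\bot$ already delivers the orthogonality against $\sigma(u_e)\in N(B)$. The (mild) obstacle is thus simply pinning down the correct functional-analytic orthogonality statement rather than carrying out any estimate; once $(u_e,\sigma(u_e))_{L^2(Y)}=0$ is established, the sign condition finishes the argument immediately.
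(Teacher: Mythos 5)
Your proposal is correct and follows essentially the same route as the paper's proof: equilibrium gives $\sigma(u_e)\in N(B)$, symmetry gives $\overline{R(B)}=N(B)^\bot$, hence $\int_Y u_e\sigma(u_e)\,dy=0$, and the sign condition $\sigma(s)s\geq 0$ upgrades this to $u_e\sigma(u_e)=0$ a.e. You merely spell out two steps the paper leaves implicit (that no closedness of $R(B)$ is needed, and that a nonnegative integrand with zero integral vanishes a.e.), which is a faithful and complete rendering of the same argument.
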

\begin{proof}
Since $u_e$ is an equilibrium, $\sigma(u_e)\in N(B)$. The conclusion follows from $\overline{R(B)}=N(B)^\bot$, i.e.,
$$
\int_Y u_e \sigma(u_e)dy=0.
$$
Hence $u_e\sigma(u_e)\equiv 0$.
\end{proof}
We shall now analyze the stability of $u_e=0$ for the case of non-symmetric $B$. Therefore we consider the singular value decomposition of the operator $B$ \cite{Su}: 
$$
(Bf)(y)=\sum_{l=1}^\infty \mu_l(\psi_l, f)_{L^2(Y)}\phi_l(y),
$$  
where the singular values $\mu_l \geq 0$ are the eigenvalues of $|B|$, $\{\psi_l\}$ and $\{\phi_l\}$ are orthonomal systems, $\{\psi_l\}$ is complete 
in $L^2(Y)$  and $\phi_l =U\psi_l$, where $B=U|B|$ is the polar decomposition of $B$. Here $U$ is a partial isometry so that $N(U)=N(B)$. Set
$$
f=\sum_{k=1}^\infty f_k \psi_k, \quad f_k=\int_Y f\psi_kdy.
$$ 
Thus
\begin{align*}
(Bf, f)_{L^2(Y)} 
& =\sum_{l=1}^\infty\sum_{n=1}^\infty \mu_l f_l f_n (\phi_l,\psi_n)\\
&=f^\top DT f,
\end{align*}
where $f=(f_1,f_2,\cdots)^\top$, $D={\rm diag}(\mu_1,\mu_2,\cdots)$ and $T$ is the generalized Gram matrix: 
$$
T=((\phi_l, \psi_n)).
$$
Note that $(Bf, f)_{L^2(Y)}\geq 0$ for all $f\in L^2(Y)$ if and only if $DT$ is non-negative definite (not necessarily symmetric). 

Set 
$$
u(y, t)=\sum_{l=1}^\infty u_l(t)\phi_l(y)+Z(y, t),
$$
where $Z\in R(B)^\bot$. Thus 
$$
B\sigma(u(t))=\sum_{l=1}^\infty \mu_l(\sigma(u(t)), \psi_l)\phi_l\in R(B).
$$
Assuming again $a\in R(B)$ we have $u(\cdot, t)\in R(B)$ for all $t\geq 0$. We conclude $Z\equiv 0$ and 
$$
u(y, t)=\sum_{l=1}^\infty  u_l(t) \phi_l(y).
$$ 
Thus, we find 
\begin{subequations}\label{ss}
\begin{align}
& \frac{d}{dt} u_l=-\mu_l \int_Y \sigma \left( 
\sum_{k=1}^\infty u_k\phi_k(y)
\right) \psi_l(y)dy, \\
& u_l(t=0)=\int_Y u_I \phi_l dy.
\end{align}
\end{subequations}
Now let $B$ have rank $N<\infty$ (for simplicity's sake, in order to avoid the need to discuss the convergence of infinite series):
$$
(Bf)(y)=\sum_{l=1}^N \mu_l (f,\psi_l)_{L^2(Y)} \phi_l(y).
$$
\red{It is completely standard to show that  $u=0$ is an isolated equilibrium of (\ref{ss})
if the generalized $N\times N$ gram matrix $T=((\phi_k, \psi_l)_{L^2(Y)})$ is invertible. 
Also, it is easy to show that 
$$
L(u)=\int_Y \Sigma(u)dy
$$
is a Lyapunov function for (\ref{ss}) if the $N\times N$ matrix $DT$ with $D={\rm diag}(\mu_1,\cdots, \mu_N)$ is positive definite,  i.e., $L(u)>0$ and $L(u)$ decreases along trajectories around the origin. }

We conclude 

\begin{thm} Let $B$ have finite rank and let   \\
  (a) $T$ be invertible; \\
 (b) $DT$ be positive-definite (not necessarily symmetric).\\
 Then, the equilibrium $u=0$ is locally asymptotically stable. The convergence of $u(t)$ to zero is exponential. 
 \end{thm}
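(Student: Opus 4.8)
The plan is to read the statement as what it really is after the reduction carried out just before it: by that reduction the full dynamics restricts to the finite system (\ref{ss}) on $\mathbb{R}^N$ in the coordinates $u_1,\dots,u_N$, with $u=0$ an equilibrium, so the assertion is the local exponential stability of $0$ for a smooth ODE $\dot u=H(u)$ on $\mathbb{R}^N$. I would prove it by Lyapunov's indirect (linearization) method, computing the spectrum of $DH(0)$, and use the functional $L$ only as a corroborating remark.

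First I would differentiate $H(v)_l=-\mu_l\int_Y\sigma\bigl(\sum_k v_k\phi_k\bigr)\psi_l\,dy$ at $v=0$. Using $\sigma'(0)=\alpha>0$ and the orthonormality of the $\phi_l$, the Jacobian is $DH(0)=-\alpha\,D_NT_N^{\top}$, with entries $-\alpha\mu_l(\phi_k,\psi_l)$, where $D_N=\mathrm{diag}(\mu_1,\dots,\mu_N)$, $\mu_l>0$, and $T_N=((\phi_k,\psi_l))$. The decisive algebraic point is that the three matrices $D_NT_N$, $T_ND_N$ and $D_NT_N^{\top}$ all share the same eigenvalues: $D_NT_N$ and $T_ND_N$ have the same characteristic polynomial (as $AB$ and $BA$ do), and $D_NT_N^{\top}$ has the same eigenvalues as its transpose $T_ND_N$. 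Hence the spectrum of $DH(0)$ is $-\alpha$ times the spectrum of $D_NT_N$.

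Next I would invoke the standard fact that a real matrix $M$ whose quadratic form is positive, $x^{\top}Mx>0$ for all real $x\neq0$ (equivalently, with positive-definite symmetric part $M_s$), has all eigenvalues in the open right half-plane: for an eigenpair $(\lambda,z)$ with $z\in\mathbb{C}^N$ one has $\mathrm{Re}(z^{*}Mz)=z^{*}M_sz>0$ while $z^{*}Mz=\lambda|z|^2$, forcing $\mathrm{Re}\,\lambda>0$. Applying this to $M=D_NT_N$ under hypothesis (b), every eigenvalue of $D_NT_N$, and therefore of $DH(0)=-\alpha D_NT_N^{\top}$, has real part at most some $-\delta<0$. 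The Poincar\'e--Lyapunov linearization theorem then produces a neighborhood of $0$ and constants $C,\delta'>0$ with $|u(t)|\le C|u(0)|e^{-\delta' t}$, i.e.\ local asymptotic stability with exponential convergence. Hypothesis (a) secures that $0$ is an isolated equilibrium of (\ref{ss}); I would note that (a) is in fact implied by (b), since a positive-definite $D_NT_N$ is invertible and then so is $T_N=D_N^{-1}(D_NT_N)$.

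The reason I would make this spectral route primary is that it is robust to the transpose conventions: only eigenvalues enter, and these are shared by $D_NT_N$, $T_ND_N$, $D_NT_N^{\top}$. The alternative suggested by the computation preceding the theorem, namely using $L(u)=\int_Y\Sigma(u)\,dy$ with $L(u)\sim\frac{\alpha}{2}|u|^2$ and $\frac{d}{dt}L\sim-\alpha^2\,u^{\top}T_ND_Nu$, is more delicate, because positive-definiteness of the quadratic form $u^{\top}T_ND_Nu$ is congruence- rather than similarity-invariant and so is not literally the same as hypothesis (b) on $D_NT_N$ unless $T_N$ is symmetric; reconciling the two would require an extra argument. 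The genuinely substantial obstacle, however, is analytic rather than algebraic: making every ``$\sim$'' rigorous by bounding the $o(|u|)$ Taylor remainder of $\sigma$ uniformly on a small ball so that the quadratic leading terms dominate, which I would handle by assuming $\sigma\in C^2$ near $0$ with $\sigma'(0)=\alpha>0$ and choosing the neighborhood quantitatively.
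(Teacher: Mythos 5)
Your proof is correct, but it takes a genuinely different route from the paper's. The paper disposes of the theorem in one line --- ``standard arguments using Lyapunov functionals for ODE systems [Kh96]'' --- meaning the functional $L(u)=\int_Y\Sigma(u)\,dy$, $\Sigma'=\sigma$, built in the computation immediately preceding the statement: near $u=0$ one has $L(u)\sim\frac{\alpha}{2}|u|^2$ and $\frac{d}{dt}L(u(t))\sim-\alpha^2\sum_{l,k}u_ku_l\,\mu_l(\psi_l,\phi_k)_{L^2(Y)}$, i.e.\ $-\alpha^2$ times precisely the quadratic form of $D_NT_N$ hypothesized in (b) (the displayed $u^\top D_NT_ND_Nu$ appears to be a typo; the line above it in the paper gives the double sum just quoted), so $\dot L\le-cL$ holds on a small ball and exponential decay of $|u(t)|$ follows with no spectral theory at all. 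Your linearization route instead extracts from (b) only the weaker spectral consequence $\mathrm{Re}\,\lambda>0$ for $\lambda$ in the spectrum of $D_NT_N$ and concludes by Poincar\'e--Lyapunov. What you gain is exactly what you advertise: immunity to the row/column convention in $T_N=((\phi_k,\psi_l))$, since only eigenvalues enter and $D_NT_N$, $T_ND_N$, $D_NT_N^\top$ share them, plus the tidy observation, absent from the paper, that (a) is implied by (b). What the Lyapunov route gains is that no reconciliation of quadratic forms is needed in the first place --- the derivative of $L$ produces the form in (b) directly (note also that $u^\top Mu=u^\top M^\top u$ always, so your congruence-versus-similarity worry concerns $T_N^\top D_N$ rather than $T_ND_N$ and dissolves under the paper's index convention) --- and that the same functional powers the nonlinear, non-local estimates elsewhere in Section~3.

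One caveat on the step you yourself single out as the substantial one. Controlling the Taylor remainder ``uniformly on a small ball'' by assuming $\sigma\in C^2$ is subtler than it sounds: smallness of $|u|$ in $\mathbb{R}^N$ makes $g(y)=\sum_k u_k\phi_k(y)$ small only in $L^2(Y)$, not pointwise, so the argument of $\sigma$ need not stay in a small interval, and the pointwise bound $|\sigma(s)-\alpha s|\le Cs^2$ cannot be integrated against $\psi_l$ without extra integrability of the $\phi_k$ (say $\phi_k\in L^4(Y)$). The fix is free under the paper's standing assumption $(A_1)$ that $\sigma$ is globally Lipschitz: write $|\sigma(s)-\alpha s|\le\omega(|s|)\,|s|$ with $\omega$ bounded and $\omega(0^+)=0$, and split $Y$ into $\{|g|\le\delta\}$ and $\{|g|>\delta\}$. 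Cauchy--Schwarz bounds the first contribution by $\omega(\delta)\,|u|\,\|\psi_l\|_{L^2(Y)}$; on the second, $|\{|g|>\delta\}|\le|u|^2/\delta^2$ together with absolute continuity of $\int\psi_l^2$ makes it $o(|u|)$ after choosing, e.g., $\delta=|u|^{1/2}$. This yields $H(u)=DH(0)u+o(|u|)$, which is all that the linearization theorem (or a direct variation-of-constants plus Gronwall argument) requires.
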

 We remark that  the local exponential stability of $u$ induces local exponential  stability of $f$.  The proof follows standard arguments using Lyapunov functionals for ODE systems \cite{Kh96}. 
  
 If $B$ is symmetric, then $\phi_l=\psi_l$, $T=id$ and $DT=D$ is positive definite if only if  $B > 0$.
 
 Also note that it is a simple exercise in functional analysis to do away with the finite rank assumption. The result carries over to the general case without change.

 \subsection{On solutions for the ReLu activation} 
 Note that for the arctan, sigmoid, and hyperbolic tangent activation functions, the asymptotic growth rate in time of the solution $f$ is at most linear, no matter what the properties of the operator $B$ are. In this respect, the ReLu and leaky ReLu activation functions behave worse as we shall show below.  
 
We now consider the  activation function $\sigma(s)=s^+$, which is one of the most popular activations used in practical applications. \red{In this case with $T>0$ assuming $a\in L^1((0, T); L^1(Y))$ and $b\in L^1((0, T); L^\infty(Y\times Y))$, from the equation for $f$  it follows by integration against sign$(f)$
 $$
 \frac{d}{dt}\int_Y|f(y, t)|dy \leq \int_Y |a(y, t)|dy +\sup_{Y\times Y}|b(t)|\int_Y |f(y, t)|dy.
 $$
 Thus, from the Gronwall inequality we find   
 $$
 \int_Y|f(y, t)|dy  \leq \left( \int_Y |f_I(y)|dy +\|a\|_{L^1((0, t); L^1(Y))}
 \right)\exp\left(\|b\|_{L^1((0, t); L^\infty(Y\times Y))}
  \right).
 $$
 for $0\leq t\leq T$.}  It is actually not difficult to construct an example of a rank 1 integral operator $B$ such that the exponential upper bound is sharp. This tells us that for $\sigma (s)=s^+$, exponential forward instability for $f$ is possible.

\red{In the autonomous case and} if $B_s \geq0$, we can actually prove that $\|f(\cdot, t)\|_{L^2(Y)}$ has at most linear growth in time.
\begin{prop} Let  $\sigma(s)=s^+$, $a=a(y), b=b(y, z)$, and $B_s=\frac{1}{2}(B+B^\top)\geq 0$. Then there exist $C_1, C_2>0$ such that 
$$
\|f(t)\|_{L^2(Y)}\leq C_1+C_2 t \quad \forall t>0.
$$
\end{prop}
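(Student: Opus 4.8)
The plan is to leverage the Lyapunov estimate already recorded in Theorem 3.2(i), which for the ReLU activation happens to control exactly the quantity $\|f_t\|_{L^2(Y)}$ uniformly in time; once $\|f_t\|_{L^2(Y)}$ is bounded independently of $t$, integrating in time forces at most linear growth of $\|f\|_{L^2(Y)}$. Concretely, I would set $u:=a-Bf$, so that the forward equation reads $f_t=\sigma(u)=u^+$. For $\sigma(s)=s^+$ the primitive is $\Sigma(s)=\int_0^s \xi^+\,d\xi=\tfrac12(s^+)^2$, and therefore
$$
\int_Y \Sigma(u(y,t))\,dy=\tfrac12\|u^+(\cdot,t)\|_{L^2(Y)}^2=\tfrac12\|f_t(\cdot,t)\|_{L^2(Y)}^2 .
$$
Under the standing autonomy of $(a,b)$ from Section 3.2 and the hypothesis $B_s\geq 0$, Theorem 3.2(i) yields $\int_Y \Sigma(u(t))\,dy\leq \int_Y \Sigma(u_I)\,dy$ with $u_I=a-Bf_I$, which translates into the uniform bound
$$
\|f_t(\cdot,t)\|_{L^2(Y)}=\|u^+(\cdot,t)\|_{L^2(Y)}\leq \|u_I^+\|_{L^2(Y)}=:C_2\qquad \forall\, t>0 .
$$

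To keep the argument self-contained one may also rederive this estimate directly: differentiating $u_t=-B\sigma(u)$ tested against $\sigma(u)=u^+$ gives $\tfrac{d}{dt}\int_Y \Sigma(u)\,dy=-(B\sigma(u),\sigma(u))_{L^2(Y)}=-(B_s\sigma(u),\sigma(u))_{L^2(Y)}\leq 0$, the antisymmetric part of $B$ dropping out of the quadratic form. Then I would integrate the reconstruction identity $f(\cdot,t)=f_I+\int_0^t u^+(\cdot,s)\,ds$ (equation \eqref{ff} with the present $\sigma$) and apply Minkowski's integral inequality:
$$
\|f(\cdot,t)\|_{L^2(Y)}\leq \|f_I\|_{L^2(Y)}+\int_0^t \|u^+(\cdot,s)\|_{L^2(Y)}\,ds\leq \|f_I\|_{L^2(Y)}+C_2\,t ,
$$
so the claim follows with $C_1=\|f_I\|_{L^2(Y)}$ and $C_2=\|(a-Bf_I)^+\|_{L^2(Y)}$.

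I do not expect a serious obstacle: the whole proof rests on the single observation that for ReLU the Lyapunov density equals $\Sigma(u)=\tfrac12(u^+)^2=\tfrac12 f_t^2$, so the monotone functional of Theorem 3.2(i) is nothing but a time-uniform $L^2$ bound on $f_t$. The only points demanding minor care are the validity of Minkowski's integral inequality (legitimate since $u^+\geq 0$ and $u^+(\cdot,s)\in L^2(Y)$ for a.e.\ $s$, the integrand being a nonnegative measurable $L^2(Y)$-valued map) and ensuring that the autonomy assumption of Section 3.2 is genuinely in force, so that no extra term $\int_Y \sigma(u)\,a_t\,dy$ appears in the Lyapunov computation; were $a$ time-dependent, this additional term would have to be absorbed and the linear-growth conclusion would require a further hypothesis on $a_t$.
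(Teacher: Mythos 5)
Your proof is correct and follows essentially the same route as the paper: the paper likewise tests $u_t=-Bu^+$ against $u^+$ to get the monotone Lyapunov quantity $\tfrac12\int_Y (u^+)^2\,dy$ (which, as you note, is exactly $\int_Y\Sigma(u)\,dy$ from Theorem 3.2(i) for ReLU), deduces the time-uniform $L^2$ bound on $f_t=u^+$, and integrates $f(t)=f_I+\int_0^t \partial_s f\,ds$. Your explicit constants $C_1=\|f_I\|_{L^2(Y)}$, $C_2=\|(a-Bf_I)^+\|_{L^2(Y)}$ and the remark about the autonomy of $a$ are welcome refinements but do not change the argument.
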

\begin{proof}
From $f_t=u^+$ and $u=a-B f$ we have 
$$
u_t =-Bu^+.
$$
Using the Lyapunov argument from \S \ref{sec3.2} gives 
$$
\frac{1}{2}\frac{d}{dt} \int_Y (u^+(t))^2(y)dy=-(B_su^+, u^+)_{L^2(Y)}\leq 0.
$$
Thus 
$$
\int_Y (u^+(t))^2(y)dy \leq C \;\; \forall t>0. 
$$
This together with $f_t=u^+$ yields 
$$
\int_Y (f_t)^2 dy \leq C \; \;\forall t>0. 
$$
Using the relation $f(t)=f_I +\int_0^t \partial_s f ds$, we obtain the estimate as claimed.  
\end{proof}
Note that the same result holds for the leaky ReLu activation. 

\subsection{Local conditioning of the forward problem}\label{sec3.5}
For numerical analysis and computational purpose it is beneficial to understand  the conditioning of the forward propagation operator, 
which means that its linearization of the actual solution, not only the steady state must be looked at.   Also, the output of the learning problem will be time-dependent functions $a=a(y, t)$ and $b=b(y, z, t)$  such that the forward propagation and its linearization will be non-autonomous.  Consider a solution $u=u_f(y, t)$ of the forward problem (\ref{f1}) and analyze the linearization in direction $w=w(y, t)$,  with $u=a-B f$:
\begin{align}\label{f3}
\partial_t w(y, t)=-\sigma'(u(y, t))\int_Y b(y, z, t)w(z, t)dz. 
\end{align}
If the residual neural network problem is `very' deep, and if $u(t)$ is close to the stationary state $u\equiv 0$ (assuming that $a$ and $b$ 
stabilize sufficiently fast as $t\to \infty$), then the dynamics for $w$  will be close to the autonomous case considered above and controlled by the linearized autonomous problem in the beginning of section \ref{sec3.2}, when $b(y,z)$ is replaced by $b(y,z,t=\infty )$. This can be shown by standard semi-group perturbation theory, just to get a more quantitative feeling, multiply by $\frac{w}{\sigma'(u)}$ and integrate over $Y$:  
$$
\frac{1}{2} \frac{d}{dt}\int_Y \frac{w^2(y, t)}{\sigma'(u)}dy
=-(B(t)w, w)_{L^2(Y)} -\frac{1}{2}\int_Y \frac{\sigma''(u)u(t)}{(\sigma'(u))^2}w^2(t)dy.
$$
Here $-\frac{\sigma''(u)  u_t(t)}{(\sigma'(u))^2}$ measures the effect of the non-autonomous coefficients of the linearization at the local solution (instead of a stationary one).  If $f(t)$ is far away from the stationary state, then much less can be said about the operator $-\sigma'(w)B(t)$ in general, except that it is bounded by 
$$
\sup_{\mathbb{R}}|\sigma'|\|b(\cdot, \cdot, t)\|_{L^2(Y\times Y)}
$$ 
as an operator from $L^2(Y)$ into itself. It is generally non self-adjoint, even if $B(t)$ is self-adjoint.  

For the purpose of numerical discretization it is important to understand the time-local stability properties of the linearization \eqref{f3}. As usually done in ODE theory we freeze the coefficients of \eqref{f3} at a fixed time $t_0>0$ and find the problem
$$
\partial_t\tilde w(y,t) = -\int_Y \sigma'((y,t_0))b(y,z,t_0)\tilde w(z,t)dz,
$$
whose analysis is totally analogous to the first part of Section \ref{sec3.2} when $b(y,z)$ is replaced by $\sigma(u(y,t_0))b(y,z,t_0)$ and $\sigma'(0)$ by $1$. Globally growing/decaying modes $\tilde w$ represent local instability/stability of the nonlinear problem \eqref{f1} at time $t=t_0$ (see also \cite{He16a}).

Strong stability properties can be obtained by considering other conditions of neural networks. Here we just mention only one example (see \cite{RH18})
\begin{align*}
& f_t = B^\top \sigma(a-Bf),\\
& f(t=0) = f_I.
\end{align*}
A simple energy method shows that the associated flow in $L^2(Y)$ is contractive, if only $\sigma'\leq 0$ on $\mathbb{R}$, i.e., any two solutions $f_1,f_2$ satisfy
$$
||f_1(t)-f_2(t)||_{L^2(Y)}\leq ||f_1(t=0)-f_2(t=0)||_{L^2(Y)}.
$$

\section{Back propagation and optimal control} 
\subsection{Computing cost gradients}\label{sec4.1} The main technical difficulty in training continuous-depth networks is performing reverse-mode differentiation 
(also known as back propagation). We introduce the following notation: 
$$
a=a(y, t),\; b=b(y, z, t), \quad u_{a, b}:=a-B_bf, \quad f=f_{a, b},
$$
where $f_{a, b}$ solves (\ref{ft2}) below and 
$$
(B_bv)(y)=\int_Y b(y, z, t)v(z)dz.
$$
For the sake of simplicity in the calculation,  
we consider first optimizing a simple terminal value loss functional 
$$
J(a, b)=\frac{1}{2} \int_Y(f_{a, b}(y, T)- \tilde f(y))^2dy
$$
subject to 
\begin{subequations}\label{ft2}
\begin{align}
& \partial_t f_{a, b}=\sigma(a-B_b f_{a, b}),\\
& f_{a, b}(t=0)=f_I.
\end{align}
\end{subequations}
Here $\tilde f(y)$ is the target output function.  Let the Gateaux differential of $f$ in $a$ along direction $\alpha$ be 
$$
g=D_af_{a, b}(\alpha)=\lim_{\epsilon \to 0} \frac{1}{\epsilon}(f_{a+\epsilon \alpha, b}-f_{a, b}),
$$
then 
\begin{align*}
&g_t=\sigma'(u_{a, b})(\alpha-B_bg),\\
& g(t=0)=0.
\end{align*}
Let $M_{a, b}(t, s)$ be the evolution system \cite{Pa92} generated by $-\sigma'(u_{a, b})B_b$, i.e. $z(t):=M_{a, b}(t, s)z_0$ solves 
\begin{align*}
&  z_t=-\sigma'(u_{a, b}(t))B_bz, \; t\geq s,\\
& z(s)=z_0.
\end{align*}
Then 
$$
g(t)=\int_0^t M_{a, b}(t, s)(\sigma'(u_{a, b}(s)\alpha(s))ds.
$$
Similarly, $h=D_bf(\beta)$ solves 
\begin{align*}
& h_t=-\sigma'(u_{a, b})(B_bh +B_\beta f), \\
& h(t=0)=0,
\end{align*}
which gives 
$$
h(t)=-\int_0^t M_{a, b}(t, s)(\sigma'(u_{a, b}(s)B_\beta f(s))ds.
$$
We proceed to compute the derivatives of $J$ with respect to $a$ and $b$ as follows: 
\begin{align*}
D_aJ(a, b)(\alpha) & =\int_Y (f_{a, b}(y, T)-\tilde f(y))g(y, T)dy \\
& = \int_Y (f_{a, b}(y, T)-\tilde f(y))\int_0^T M_{a, b}(T, s)(\sigma'(u_{a, b}(s)\alpha(s))(y) dsdy \\
&=\int_0^T\int_Y \alpha(y, s) \sigma'(u_{a, b}(y, s))M_{a, b}(T, s)^*(f_{a, b}(y, T)-\tilde f(y))dyds.
\end{align*}
Thus 
\begin{align*}
D_aJ(a, b)(y, s) =\sigma'(u_{a, b}(y, s))M_{a, b}(T, s)^*(f_{a, b}(T)-\tilde f)(y).
\end{align*}
Define 
$$
r_T(y):=(f_{a, b}(T)-\tilde f)(y).
$$
Clearly, $r(s):=M_{a, b}(T, s)^*r_T$ solves the co-state terminal value problem, 
\begin{align*}
&  r_t=(\sigma'(u_{a, b}(s)B_b)^*r= B_b^*(\sigma'(u_{a, b})(s)r),\\
& r(T)= r_T.
\end{align*}
Note that $B_b^*=B_{b^\top}$ with $b^\top(y, z, t)=b(z, y, t)$. Thus, $r(s)=r_{a, b}$, and $r_{a, b}$ solves 
\begin{subequations}\label{co}
\begin{align}
& \partial_t r_{a, b}= B_{b^\top}(\sigma'(u_{a, b})(s)r_{a, b}(s)),\\
& r_{a, b}(T)= f_{a, b}(T)-\tilde f.
\end{align}
\end{subequations}
We conclude  
\begin{align}\label{daj}
D_aJ(a, b)(y, s)=\sigma'(u_{a, b}(y, s))r_{a, b}(y, s).
\end{align}
As for the gradient with respect to $b$ we have 
\begin{align*}
& D_bJ(a, b)(\beta) \\
& =\int_Y (f_{a,b}(y, T)-\tilde f(y))h(y, T)dy \\
& = -\int_Y (f_{a, b}(y, T)-\tilde f(y))\int_0^T M_{a, b}(T, s)(\sigma'(u_{a, b})(s)B_\beta f_{a,b}(s))(y)dsdy \\
& =- \int_0^T\int_Y \sigma'(u_{a, b}(s)) B_\beta f_{a, b}(s)M_{a, b}(T, s)^*(f_{a, b}(T)-\tilde f)(y)dyds\\
&=-\int_0^T\int_Y\int_Y \beta(y,z, s)f_{a, b}(z,s) \sigma'(u_{a, b}(y, s))M_{a, b}(T, s)^*(f_{a, b}(T)-\tilde f)(y)dydzds\\
&=-\int_Y\int_Y \int_0^T\beta(y, z, s) f_{a,b}(z,s)\sigma'(u_{a, b}(y, s))r_{a,b}(y,s)ds dydz.
\end{align*}
This gives 
\begin{align}\label{dbj}
D_bJ(a, b)(y, z, s)=-f_{a,b}(z,s)\sigma'(u_{a, b}(y, s))r_{a, b}(y, s).
\end{align}
We collect the results on the gradient of $J$ in the following:
\begin{prop} \label{prop4.1} 
We have 
\begin{align*} 
(i) \qquad & D_aJ(a, b)(y, s)=\sigma'(u_{a, b}(y, s))r_{a, b}(y, s), \\
(ii) \qquad & D_bJ(a, b)(y, z, s)=-f_{a,b}(z,s)\sigma'(u_{a, b}(y, s))r_{a, b}(y, s).
\end{align*}
\end{prop}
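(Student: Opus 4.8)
The plan is to assemble the two gradient formulas from a sensitivity (variational) analysis of the state equation combined with a duality argument that introduces the co-state $r_{a,b}$. Since the terminal functional $J$ is quadratic, the chain rule gives $D_aJ(a,b)(\alpha)=\int_Y (f_{a,b}(T)-\tilde f)\,g(T)\,dy$, where $g=D_af_{a,b}(\alpha)$ is the Gateaux differential of the state in the direction $\alpha$, and similarly $D_bJ(a,b)(\beta)=\int_Y(f_{a,b}(T)-\tilde f)\,h(T)\,dy$ with $h=D_bf_{a,b}(\beta)$. So everything reduces to representing $g(T)$ and $h(T)$ and then transferring the terminal weight $f_{a,b}(T)-\tilde f$ backwards in time.

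First I would establish that $g$ and $h$ are well defined and solve the linearized equations. Differentiating the state equation (\ref{ft2}) in the directions $\alpha$ and $\beta$ formally yields $g_t=\sigma'(u_{a,b})(\alpha-B_bg)$ with $g(0)=0$, and $h_t=-\sigma'(u_{a,b})(B_bh+B_\beta f)$ with $h(0)=0$. These are linear, non-autonomous equations of precisely the type handled by the well-posedness theory of Theorem 3.1 (under $\sigma\in C^1$ with bounded $\sigma'$, so that $\sigma'(u_{a,b})B_b$ is a bounded operator-valued coefficient), so the limits defining $g$ and $h$ exist in $L^2(Y;C[0,T])$ and the sensitivity equations hold rigorously rather than merely formally.

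Next I would solve these by Duhamel's formula using the evolution system $M_{a,b}(t,s)$ generated by $-\sigma'(u_{a,b})B_b$, giving $g(t)=\int_0^t M_{a,b}(t,s)\big(\sigma'(u_{a,b}(s))\alpha(s)\big)ds$ and the analogous representation for $h$. Substituting into the chain-rule expressions and using Fubini, I would move the operator onto the terminal residual via the adjoint $M_{a,b}(T,s)^*$. The crucial step is to identify $r(s):=M_{a,b}(T,s)^*\,r_T$, with $r_T=f_{a,b}(T)-\tilde f$, as the solution of the backward co-state problem (\ref{co}); this follows from the duality $\langle M(T,s)z_0,w\rangle=\langle z_0,M(T,s)^*w\rangle$ for evolution systems (see \cite{Pa92}) together with the elementary identity $B_b^*=B_{b^\top}$. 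Reading off the Riesz representatives of the resulting linear functionals in $\alpha$ and $\beta$ then produces exactly (i) and (ii).

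The main obstacle is the rigorous treatment of the adjoint evolution system: one must verify that $s\mapsto M_{a,b}(T,s)^*r_T$ is differentiable in the backward variable and solves (\ref{co}), which is where the temporal regularity of $\sigma'(u_{a,b}(t))$ and the boundedness of $B_b$ on $L^2(Y)$ are needed. Everything else is bookkeeping with Fubini and the chain rule; indeed the computation leading to (\ref{daj}) and (\ref{dbj}) already carries this out, so the proof of the Proposition amounts to collecting those two identities.
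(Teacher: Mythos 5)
Your proposal is correct and follows essentially the same route as the paper: chain rule on the terminal loss, Duhamel representation of the sensitivities $g$ and $h$ via the evolution system $M_{a,b}(t,s)$ generated by $-\sigma'(u_{a,b})B_b$, passage to the adjoint $M_{a,b}(T,s)^*$ identified with the co-state $r_{a,b}$ solving (\ref{co}) using $B_b^*=B_{b^\top}$, and reading off (\ref{daj}) and (\ref{dbj}). The only difference is that you flag and sketch the rigorous justification of the sensitivity equations and the backward differentiability of $s\mapsto M_{a,b}(T,s)^*r_T$, which the paper treats as implicit in the formal computation.
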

Therefore, conditions (necessary and sufficient) for a stationary point 
$$
(a, b)\in L^2(Y \times (0, T))\times L^2(Y\times Y \times (0, T))
$$
of the functional $J(a, b)$ are:  \\
(a)  solve 
$$
f_t=\sigma(a-B_bf),0<t \leq T, \quad f(t=0)=f_I
$$
for $f=f_{a, b}=f_{a, b}(y,t)$, $u_{a, b}:=a-B_bf_{a,b}$; \\
(b) solve 
\begin{align*}
& r_s=B_{b^\top}(\sigma'(u_{a, b})(s)r(s)),\; 0\leq  s < T,\\
& r(s=T)= f_{a,b}(T)-\tilde f
\end{align*}
for $r=r_{a,b}=r_{a,b}(y,s)$. Then the stationarity condition is 
\begin{align}
\sigma'(u_{a,b}(y,s))r_{a,b}(y,s)=0, \quad a.e. \; y\in Y, \quad s\in (0,T);
\end{align}
while (ii) of Proposition \ref{prop4.1} does not add any additional information.
\begin{rem}\label{rem4.1}
If $\sigma'>0$ (this holds for arctan, hyperbolic tangent, and Sigmoid), the above condition implies that the optimal $(a^*,b^*)$ exists if only if $\tilde f$ is reachable in the sense that the above two derivatives vanish if and only if there exist parameter functions $a^*$ and $b^*$ such that $f_{a^*,b^*}(y,T)=\tilde f(y)$ a.e. in $Y$. For the ReLu activation instead the situation is entirely different as any $f_{a,b}$ with $a\leq B_bf_{a,b}\in Y\times (0,T)$ is a stationary point of $I(a,b)$, which makes the search of a minimizer in general very difficult.
\end{rem}

\begin{rem}\label{rem4.2}
Note that the conclusion of Remark \ref{rem4.1} does not hold if the cost functional is regularized by, say,  
the Tikhonov regularizer 
\begin{align*}
R(a,b) & =\int_{Y} \int_0^T |a(y,t)|^2 dt dy +\int_{Y\times Y} \int_0^T|b(y,z, t)|^2dt dydz \\
\end{align*}
such that $J(a, b)$ is replaced by 
\begin{align}\label{jmod}
J_{\rm mod}(a,b):=J(a, b) +\lambda R(a,b).
\end{align}
Then 
\begin{subequations}\label{Dm}
\begin{align}
& D_a J_{\rm mod}(a,b)=(\sigma'(u_{a, b})r_{a, b})(y, s) +\lambda a(y, s),\\
&D_b J_{\rm mod}(a,b)= - f_{a, b}(z, s)(\sigma'(u_{a, b})r_{a, b})(y, s) +\lambda b(y, z, s)
\end{align}
\end{subequations} 
This is commonly done in the deep learning applications.  
\end{rem}
The above analysis is well generalizable to the classification problem of section \ref{sec2.2} and section \ref{sec2.3} for which only the final cost needs to be modified by 
$$
J(a, b)=\frac{1}{2}\int_Y |C^{\rm pre}(y)-C(y)|^2 dy
$$
with 
$$
C^{\rm pre}(y)=h(O_T(y)), \quad O_T(y)=\int_Y W(y, z)f(z, T)dz+\mu(y). 
$$
For the back propagation, we obtain the same equation  
\begin{align*}
 r_s=B_{b^\top}(\sigma'(u_{a, b})(s)r(s)),\; 0\leq  s < T,
\end{align*}
but with a different terminal condition
$$
r(z, T)=\int_Y (C^{\rm pre}(y)-C(y))h'(O_T(y))W(y, z)dy.
$$

\subsection{Pontryagin Maximum Principle} \label{sec4.2}
We now view the deep learning problem in the framework of the mathematical control theory using the Pontryagin maximum principle to obtain optimal controls for the network parameter functions $a$ and $b$, see \cite{FR75}. This is a standard application of control theory but is of particular interest, when optimizers $(a, b)$ which vary in regions with boundaries, are sought, possibly as an alternative to the Tikhonov functional regularization \eqref{jmod}.  

\red{In fact for the deep learning problem the maximum principle can be used to (rather) explicitly compute the optimal parameter function in terms of the optimal state and co-state variables {obtaining bang-bang} type controls. }

Let $a=a(y, t)$ and $b=b(y, z, t)$ be  in a measurable set $A\subset \mathbb{R}^2$ pointwise a.e., now (looking for maximizers instead of minimizers, to keep in line with the usual convention in control theory). Define 
$$
I(a, b)=- \frac{1}{2} \int_Y (f_{a, b}(y, T)-\tilde f(y))^2dy.
$$
Look for 
$$
\max_{(a,b)\in A}I(a, b)=I(a^*, b^*).
$$
As usual in control theory we define the Hamiltonian
$$
H(f, r, a, b):=\int_Y \sigma(a-B_bf)r dy,
$$
where $r$ is the co-state variable.    Let $(a^*, b^*)$ be optimal for $I$. Define $f^*=f_{a^*,b^*}$, then 
\begin{align*}
&  f_t^*=\sigma(a^*-B_{b^*}f^*),\; 0\leq t\leq T,\\
& f^*(t=0)=f_I.
\end{align*}
Also define the optimal co-state $r^*$ by 
\begin{align*}
&  r_t^*=   B_{(b^*)^\top}(\sigma'(a^*-B_{b^*}f^*)r^*),\; 0\leq t\leq T, \\
& r^*(t=T)=\tilde f -f^*(T),
\end{align*}
where $(b^*)^\top(y, z, t)=b^*(z, y, t)$.
Then $(a^*, b^*)$ satisfies the Pontryagin maximum--principle. 
\begin{align}\notag 
H(f^*, r^*, a^*, b^*)& =\max_{(a, b)\in A} H(f^*, r^*, a, b)\\
& =\max_{(a, b)\in A}\int_Y\sigma(a-B_b f^*)r^*dy.
\end{align} 
It is a classical result of control theory and we refer the reader to \cite{BCD97}. Note that the Hamiltonian is constant along the coupled dynamics: 
$$
\frac{d}{dt} H(f^*(t), r^*(t), a^*(t), b^*(t))=0.
$$
As an example, take $A=[a^-, a^+]\times [b^-, b^+] \subset \mathbb{R}^2$. 
Let $\sigma$ be strictly increasing in $\mathbb{R}$. Then one concludes immediately defining $\chi_\Omega$ as the indicator function on the set $\Omega$,  
\begin{align*}
a^*(y, t)\chi_{\{ r^*\neq 0\}} & =a^+\chi_{\{r^* > 0\}}(y, t) +a^-\chi_{\{r^*< 0\}}(y, t),\\
 \text{and} & \\
& b^*(y,z, t)\chi_{\{ r^*\neq 0\}}(y,t)\chi_{\{ f^*\neq 0 \}}(z,t)\\ 
& =b^-\left(\chi_{\{r^*> 0\}}(y, t)\chi_{\{f^*\geq 0\}}(z, t) + \chi_{\{r^*< 0\}}(y, t)\chi_{\{f^*<0\}}(z, t)\right)\\
& \qquad +b^+\left(\chi_{\{r^*> 0\}}(y, t)\chi_{\{f^*<0\}}(z, t) + \chi_{\{r^*< 0\}}(y, t)\chi_{\{f^*>0\}}(z, t)\right).
\end{align*} 
Note that this defines $a^*$ and $b^*$ completely iff the sets where $r^*$ and $f^*$ equal $0$ have both zero Lebesgue measure in $Y\times (0,T)$. For a general control set $A$, compact in $\mathbb{R}^2$ , set:
$$
K_A=\left\{(a, b)\in  \mathbb{R} \times L^2(Y)\Big|\quad (a, b(z))\in A\; a.e. \;\text{in}\;\; Y \right\}.
$$
$K_A$ is closed in $ \mathbb{R} \times L^2(Y)$. For $f\in L^2(Y)$ define the affine linear functional 
\begin{align}\label{tf}
T_f(a, b)=a-\int_Y b(z)f(z)dz.
\end{align} 
Clearly, $T_f: K_A\to \mathbb{R}$ assumes its minimum at  $(a_f^-, b_f^-)$ and maximum at $(a_f^+, b_f^+)$ in $K_A$ since 
$T_f$ is bounded on $K_A$, weakly  continuous and minimizing and maximizing sequences in $K_A$ have  weakly converging subsequences 
in $K_A$. Clearly, uniqueness of argmin ad argmax does not hold in general (for example if the Lebesgue measure of the set where $f=0$ is positive). Again assuming that $\sigma$ is strictly increasing, there are pairs in the set of argmin and argmax such that:
\begin{subequations}\label{49}
\begin{align}
a^*(y, t)\chi_{\{r^*\geq 0\}} & =a^+_{f^*(t)}\chi_{\{r^*> 0\}}(y, t) +{a}^-_{f^*(t)}\chi_{\{r^*< 0\}}(y, t),\\
b^*(y,z, t)\chi_{\{r^*\geq 0\}} & =b^+_{f^*(t)}(z)\chi_{\{r^*> 0\}}(y, t)+b^-_{f^*(t)}(z)\chi_{\{r^*<0\}}(y, t).
\end{align} 
\end{subequations}
Similarly as above this does not define $a^*$ and $b^*$ in full generality.

Note that the forward evolution for $f^*$ and the backward evolution for the co-state $r^*$ are now coupled in a highly nonlinear way through the optimal controls $(a^*, b^*)$. Existence and uniqueness issues for this highly nonlinear initial-terminal value problem will be the subject of future work.

In particular we remark that the Pontryagin Maximum Principle does not give any information on optimality if the state $\tilde f$ is reachable by a control in $K_A$. In this case $r^*=0, max I=0$ and the optimal control has to be computed as in Section \ref{sec4.1}.

\begin{rem} For the network loss function of the classification problem 
$$
I(a, b)=-\frac{1}{2}\int_Y|C_{a, b}^{\rm pre} -C|^2dy,
$$
with 
$$
C_{a, b}^{\rm pre}(y)=h\left(\int_Y f_{a, b}(z, T)W(z, y)dz+\mu(y)\right)=h(O_{a, b, T}(y)), 
$$
the only modification again is 
 the  terminal value of the co-state, 
$$
r^*(T)= - \int_Y (C_{a^*, b^*}^{\rm pre}(y)-C(y))h'(O_{a^*, b^*, T}(y))W(y, z)dy.
$$
\end{rem}

\subsection{Functional Halmilton-Jacobi-Bellman PDE}
We now present an alternative approach to the control problem based on the dynamic programming principle.
Consider 
\begin{align*}
\partial_s f(y,s) & = \sigma(a(y,s)-(B_b f)(y,s)), \quad t<s \leq T,\\
 f(y, t) & = v(y)
\end{align*} 
for general $v(\cdot)\in L^2(Y)$.  Let  a general cost functional be defined by 
$$
J_{v, t}(a, b)=\int_t^T\int_Y L(f(y, s), a, b)dyds +\frac{1}{2}\int_Y (f(y, T)-\tilde f)^2dy,
$$
where the first term denotes the running cost and the second term is a terminal cost. Define a value functional as 
$$
F(v, t)=\inf_{(a, b)\in A} J_{v, t}(a, b)=J_{v, t}(a^*,b^*).
$$
Note that $F(v, T)=\frac{1}{2}\int_Y (v(y)-\tilde f)^2dy$.  By the dynamic programming principle (see e.g.,\cite{BCD97})
we conclude 
\begin{thm}\label{HJB}
Assume the value functional $F$ is smooth in its arguments $(v, t)$. Then $F(v, t)$ solves the functional Hamilton-Jacobi-Bellman (HJB) 
equation 
\begin{align}\label{Ft}
 \partial_t F(v, t)+\min_{(a, b)\in A} \left\{ \int_Y D_vF(v, t) \sigma(a-B_b v)dy +\int_Y L(v, a, b)dy\right\} = 0
\end{align}
with the terminal condition 
\begin{align}\label{FT}
F(v, T)=\frac{1}{2}\int_Y (v(y)-\tilde f(y))^2dy.
\end{align}
\end{thm}
\begin{rem} Note that $D_vF(v, t)$ is the $L^2$ variational gradient of the functional $F(t): L^2(Y) \to \mathbb{R}$. \\
(i) We can express the HJB as
$$
\partial_t F(v, t)+H(v, D_v F(v, t))=0, 
$$
where we define  the Hamiltonian as
$$
H(v, r)=\min_{(a, b)\in A} \left\{ \int_Y \sigma(a-B_b v) r dy +\int_Y L(v, a, b)dy\right\}.
$$ 
It is easy to see that the characteristic system of this functional HJB equation in the case $L=0$ is precisely the coupled optimal control system of the previous section. 

Note that the HJB equation `lives' in the space of functionals on the space $L^2(Y)$.
\end{rem}
\red{Theorem  \ref{HJB} is an important statement that links smooth solutions of the HJB equation with solutions of the optimal control problem, and hence the minimization problem (\ref{op}) in deep learning.
By taking the $\min$ in (\ref{Ft}), the HJB allows to identify the optimal control $(a, b)$.  In this sense, the HJB equation gives a necessary and sufficient condition for optimality of the learning problem (\ref{op}).This demonstrates an essential observation from the optimal control viewpoint of deep learning: the minimization can be viewed as a variational problem, whose solution can be characterized by a suitably defined Hamilton-Jacobi-Bellman equation. This very much parallels classical calculus of variations.  
However, we should note there is a price to pay for obtaining such a feedback control: the HJB equation is general difficult to solve numerically. }

Nevertheless, we present main steps for designing the optimal control $(a^*, b^*)$ using the above dynamic programming approach.\\ 

\noindent{\bf Step 1}. Solve the HJB equation 
$$
\partial_t F(v, t)+H(v, D_v F(v, t))=0  \quad 0\leq t\leq T, 
$$
subject to the terminal condition (\ref{FT}) to find the value functional $F(v, t)$. \\

\noindent{\bf Step 2}. Use $F(v, t)$ and the HJB equation to construct an optimal $(a^*, b^*)$:\\
(i) for each $v \in L^2(Y)$ and each time $t\in [0, T]$, define 
$$
(\tilde a(v(t))(y), \tilde b(v(t))(y, z))={\rm argmin}_{(a, b)\in A} \left\{\int_Y D_v F(v, t) \sigma(a-B_b v)dy +\int_Y L(v, a, b)dy \right\}.
$$ 
(ii) Next we find $\tilde f(y, s)$ by solving the following PDE 
\begin{align*}
& \partial_s \tilde f=\sigma( \tilde a(v)(y, t)-B_{ \tilde b(v)(y, z, t)} \tilde f), \quad t\leq s \leq T, \\ 
& \tilde f(t)=v.
\end{align*}
(iii) Finally define the feedback control 
$$
a^*(y, s):=\tilde a(\tilde f(s))(y),\; b^*(y, z, s):=b(\tilde f(s))(y, z),  \quad t \leq s\leq T.
$$
 \begin{thm}
 The control $(a^*, b^*)$ is optimal.
 \end{thm}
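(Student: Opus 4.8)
The plan is to establish the standard \emph{verification theorem} of dynamic programming: I will show that for every admissible control $(a,b)\in A$ the cost satisfies $J_{v,t}(a,b)\geq F(v,t)$, and that the feedback control $(a^*,b^*)$ constructed in Step~2 attains equality. Since $F(v,t)$ is by definition the infimum of $J_{v,t}$, this identifies $(a^*,b^*)$ as a minimizer and hence proves optimality.

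First I would fix an arbitrary admissible pair $(a,b)$, let $f$ be the corresponding state trajectory with $f(t)=v$, and differentiate the functional $s\mapsto F(f(s),s)$ along this curve. Using the smoothness hypothesis imposed in the statement of the HJB equation~(\ref{Ft}) and the chain rule for $L^2(Y)$-valued curves,
\begin{align*}
\frac{d}{ds}F(f(s),s)=\partial_t F(f(s),s)+\int_Y D_vF(f(s),s)\,\partial_s f(y,s)\,dy,
\end{align*}
and I substitute $\partial_s f=\sigma(a-B_b f)$. Because the minimum over $A$ in~(\ref{Ft}) is no larger than the value at the particular pair $(a,b)$, the equation~(\ref{Ft}) gives the lower bound
\begin{align*}
\partial_t F(f(s),s)\geq -\int_Y D_vF(f(s),s)\,\sigma(a-B_b f)\,dy-\int_Y L(f(s),a,b)\,dy,
\end{align*}
so that $\frac{d}{ds}F(f(s),s)+\int_Y L(f(s),a,b)\,dy\geq 0$. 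Integrating in $s$ from $t$ to $T$ and inserting the terminal condition~(\ref{FT}) yields exactly $J_{v,t}(a,b)\geq F(v,t)$.

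Next I would repeat this computation along the closed-loop trajectory of Step~2(ii)---call it $f^*$, the state driven by $(a^*,b^*)$. By the argmin construction in Step~2(i) the control used at time $s$ is precisely a minimizer of the bracketed expression in~(\ref{Ft}) evaluated at $(f^*(s),s)$; consequently the inequality of the previous paragraph becomes an equality at every $s$, giving
\begin{align*}
\frac{d}{ds}F(f^*(s),s)+\int_Y L(f^*(s),a^*,b^*)\,dy=0.
\end{align*}
Integrating from $t$ to $T$ and again invoking~(\ref{FT}) produces $J_{v,t}(a^*,b^*)=F(v,t)$. Combined with the one-sided inequality established for general controls, this shows that $(a^*,b^*)$ realizes the infimum and is optimal.

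The main obstacle is not the verification computation but its justification in the infinite-dimensional setting. One must know that $F(\cdot,\cdot)$ is Fr\'echet differentiable with $L^2$-gradient $D_vF$, that $s\mapsto f(s)$ is differentiable as an $L^2(Y)$-valued curve, and that these combine into the stated derivative formula; the smoothness hypothesis on $F$ is exactly what licenses this. The genuinely delicate points are to verify that the pointwise argmin in Step~2(i) furnishes a \emph{measurable} selection $(\tilde a,\tilde b)$ and that the resulting closed-loop PDE is well-posed, so that $f^*$ exists and $(a^*,b^*)$ is admissible. These measurability and well-posedness issues, rather than the dynamic-programming identity itself, are where the real work lies.
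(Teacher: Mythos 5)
Your proposal is correct and takes essentially the same route as the paper: the paper's proof is literally a one-line appeal to ``standard arguments from dynamic programming'' citing Bardi--Capuzzo-Dolcetta, and what you spell out is exactly that verification argument (equality in (\ref{Ft}) along the closed-loop trajectory via the argmin of Step~2, then integration of $\frac{d}{ds}F(f^*(s),s)$ against the terminal condition (\ref{FT})). One harmless redundancy: since $F$ is \emph{defined} here as the infimum of $J_{v,t}$, the inequality $J_{v,t}(a,b)\geq F(v,t)$ holds by definition and your HJB-based derivation of it is only needed in the variant where $F$ is merely assumed to be a smooth solution of the HJB equation rather than the value functional itself.
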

 \begin{proof}
 By standard arguments from dynamic programing, see \cite{BCD97}.
 \end{proof}
\red{It is worth noting that the HJB equation is a global characterization of the value function, in the sense that it must in principle be solved over the entire space of input-target distributions. Of course, one would not expect this to be the case in practice for any non-trivial machine learning problem;  hence it would be desirable to solve the HJB locally by some Lagrangian approach in order to apply to nearby input-label samplings.  Another limitation of the HJB formulation is that it assumes the value function is smooth, which is often not the case. A more flexible characterization of the value function is  to relax the solution space in an appropriate sense, such as the viscosity sense \cite{BC97}.  }

\section{Two iterative algorithms} 
\red{ 
Deep Neural Networks have drastically advanced the state-of-the-art performance in many computer science applications, yet in the face of such significant developments, the age-old stochastic gradient descent (SGD) algorithm \cite{RM51} remains one of the most popular method for training DNNs. Finding new and simple hyper-parameter tuning routines that boost the performance of state of the art algorithms remains one of the most pressing problems in machine learning (see, e.g., \cite{BCN18, GBC16}). Based on the gradients obtained in Section \ref{sec4.1}, and the Pontryagin maximum principle presented in Section \ref{sec4.2}, we will allude briefly to two respective algorithms  in this section.
 }
 \subsection{Gradient descent} We recall that the gradient of the cost functional   
 $$
 J(a, b)=\frac{1}{2}\int_Y (f_{a, b}(y, T)-\tilde f(y))^2dy
 $$
 is given by
 $$
 D_aJ=\sigma(u_{a, b}(y, s))r_{a, b}(y, s), \quad D_bJ= -f_{a, b}(z, s)\sigma(u_{a, b}(y, s))r_{a, b}(y, s),
 $$  
 where $u_{a, b}=a-B_b f_{a, b}$, and $r_{a, b}$ is obtained by solving 
 \begin{align*}
 & \partial_t r_{a, b}=B_{b^\top}(\sigma'(u_{a, b}(y, s))r_{a, b}(y,s),\quad 0\leq s \leq T,\\
 & r_{a, b}(\cdot, T)=f_{a, b}(\cdot, T)-\tilde f(\cdot).
 \end{align*}
We remark that the conditioning of the backward problem for $r_{a, b}$ is identical to the conditioning of the forward problem. 
 More precisely, with $\tau=T-s$, $0\leq \tau \leq T$, $R_{a, b}(\tau):=r_{a, b}(s)$ we obtain 
 $$
  \partial_tR_{a, b}(y, \tau)=- B_{b^\top(T-\tau)}(\sigma'(u_{a, b}(y, T-\tau)))R_{a, b}(y,\tau),\quad 0\leq \tau \leq T.
 $$
 Note that the generator of the evolution equation for $R_{a, b}$ at $\tau$ is precisely the transposed of the generator of the linearized 
 convolution equation for $f_{a, b}$ at time $T-\tau$  and we can estimate 
$$
\|B_{b^\top(T-\tau)}\circ \sigma'(u_{a, b}(T-\tau))\|_{L^2(Y)\to L^2(Y)} \leq 
\sup_{\mathbb{R}}|\sigma'| \|b(\cdot, \cdot, T-\tau)\|_{L^2(Y\times Y)}.
$$
(compare to section \ref{sec3.5}). 

 \red{GD and SGD have advantages of easy implementation and being fast for well-conditioned and strongly convex objectives. However, they have convergence issues, especially when the problem is ill-conditioned;  there is an extensive volume of research for designing algorithms to speed up the convergence (see, e.g., \cite{DHS11,KB15,Po64,TH12}).  To achieve fast convergence with large time steps (learning rates) we present the following algorithm.}
 \\[0.5cm]    
 \noindent{\bf Algorithm 1.}\\  
 {\bf Inputs}:  $\tilde f(y)$, $f_I(y)$, $a^0, b^0$ as initial guess, step size $\tau$.\\
  {\bf Outputs}:  $a, b$ and $J(a, b)$ \\
  1. For $k=1, 2, \cdots$ iterate until convergence. \\
  2. Employ the Proximal Alternating Minimization (PAM) method \cite{ABRS10} for $a$ and $b$,   
  \begin{subequations}\label{abk}
  \begin{align}
  a^{k+1}& ={\rm argmin}_a \left\{J(a, b^k)+\frac{1}{2\tau}\|a-a^k\|^2 \right\}.\\
  b^{k+1}& ={\rm argmin}_b \left\{J(a^{k+1}, b)+\frac{1}{2\tau}\|b-b^k\|^2 \right\}.
    \end{align}
      \end{subequations}
  3. Update $f$ as 
  $$
  f^{k+1}=f_{a^{k+1}, b^{k+1}}(y, s)
  $$
  by solving 
  $$
  \partial_sf=\sigma(a^{k+1}-B_{b^{k+1}} f),\quad f(t=0)=f_I.
  $$
Note that this algorithm needs to be modified when the cost functional is regularized.  For the Tikhonov regularizer 
given in Remark \ref{rem4.2},  we replace $J(a, b)$ by $J_{\rm mod}(a, b)$ defined in (\ref{jmod}) and use (\ref{Dm}) for the gradients.  

\red{ For a class of objective functions, (\ref{abk}) is analyzed in \cite{ABRS10}, where it is called the Proximal Alternating Minimization (PAM) method. Here, at each step, the distance of the parameter update acts as a regularization to the original loss function. Compared to GD (or SGD), the PAM has the advantage of being monotonically decreasing, which is guaranteed for any step
size $\tau>0$.}  Indeed, by the definition of $(a^{k+1}, b^{k+1})$ in (\ref{abk}), 
   \begin{align*}
 J(a^{k+1}, b^{k+1}) \leq J(a^{k}, b^k)-\frac{1}{2\tau}\left(\|a^{k+1}-a^k\|^2 +\|b^{k+1}-b^k\|^2\right).
  \end{align*}
We remark that (\ref{abk}a) is the celebrated proximal point algorithm (PPA) \cite{Ro76}.  
PPA based implicit gradient descent algorithms have been explored in \cite{YPBO18} for the classic $k$-means problem, and in \cite{Cet16}  for accelerating the training of DNNS. 

    
 We should point out that training deep neural networks using gradient-based optimization fall into the noncovex nonsmooth optimization. Many researchers have been working on mathematically understanding the GD method and its ability to solve nonconvex nonsmooth problems (see, e.g., \cite{ACGH18, Ka16, Ne13, SS18}). Accelerating the gradient method is also a subject of intensive studies (see, e.g., \cite{SBJ15, WWJ16}).
 
 \subsection{Hamiltonian maximization}
 \red{When training is recast as a control problem, necessary optimality conditions are formulated by the Pontryagin maximum principle (PMP). This formulation can lead to an alternative framework for training algorithms. There are actually many methods for the numerical solution of the PMP (see the survey article \cite{Ra09}), here we follow the method of successive approximations (MSA) \cite{CL82}, which is an  iterative  method  based  on  alternating  propagation  and  optimization steps. For recent works using PMP based MSA algorithms to train neural networks, we refer to \cite{LCTE18, LH18}. 
  }
  
Recall the Hamiltonian of the form 
 $$
 H(v, r, a, b)=\int_Y \sigma(a-B_b v) r dy.
 $$
We thus present the following algorithm. 
\\[0.5cm]
  \noindent{\bf Algorithm 2.}\\  
 {\bf Inputs}:  $\tilde f(y)$, $f_I(\cdot)$, $a^0, b^0$ as initial guess.\\
  {\bf Outputs}:  $a, b$ and $J(a, b)$ \\
  1. For $k=1, 2, \cdots$ iterate until convergence. \\
  2. find $f^k=f_{a^k, b^k}$ by solving the forward problem 
  $$
 \partial_t f=\sigma(a^k-B_{b^k}f), \quad f(t=0)=f_I.
  $$
  3. find $r^k=r_{a^k, b^k}$ by solving the backward problem 
  $$
 \partial_t r =B_{b^\top} (\sigma'(a^k-B_{b^k}f^k),\quad r(t=T)=\tilde f -  f^k(T).
  $$
  4. Update $(a, b)$ by 
  $$
(a^{k+1}, b^{k+1})={\rm argmax}_{(a, b)\in A} H(f^k, r^k, a, b).
  $$
 Since $\sigma$ is non-decreasing,  the linear programing problem (\ref{tf}) (see Section \ref{sec4.2}) may be used to update $(a, b)$.  
 
\red{As is the case with the maximum principle,  the above algorithm consists of two major components: the forward-backward Hamiltonian dynamics and the maximization for the optimal parameters at each step.  An important feature of the algorithm is that the Hamiltonian maximization  is  decoupled  for  each step.  In  the  language  of  deep  learning,  the  optimization step is decoupled for different network layers and only the Hamiltonian involves propagation through the layers.  This allows the parallelization of the maximization step, which is typically most time-consuming.}
 
One advantage of this approach is that it does not rely on gradients with respect to the trainable parameters  through  back-propagation. \red{An additional advantage is that one has a good control of the error through explicit estimates on the Hamiltonian (see \cite{LCTE18}). Overall, the approach opens up new avenues to attack training problems associated with deep learning. }
  
Finally, we point out that both the forward and backward PDE problems when discretized by numerical methods can lead to different network architectures (with respect to depth and width). \red{Implementation and convergence analysis of the above two learning algorithms with proper discrete network architectures for specific application tasks are left  to further work.}


\section*{Acknowledgement}  We are grateful to Michael Herty (RWTH) for his interest, which motivated us to investigate this problem and eventually led to this paper. Liu was partially supported by the National Science Foundation under Grant DMS1812666 and by NSF Grant RNMS (Ki-Net)1107291.

\bibliographystyle{abbrv}

\begin{thebibliography}{10}


\bibitem{Al87} 
L.B. Almeida.
  \newblock  A learning rule for asynchronous perceptrons with feedback in a combinatorial environment.
   \newblock {\em In Proceedings ICNN 87}, San Diego, 1987. IEEE, IEEE. 


\bibitem{ABRS10} 
   H. Attouch, J. Bolte, P. Redont and A. Soubeyran. 
 \newblock  Proximal alternating minimization and projection methods for nonconvex problems:  an approach based on the
Kurdyka-Lojasiewicz inequality.
 \newblock {\em  Math. Oper. Research},   35(2): 438--457, 2010.

   \bibitem{ACGH18} 
  S. Arora, N. Cohen, N. Golowich, and W. Hu. 
    \newblock  A convergence analysis of gradient descent for deep linear neural networks. 
   \newblock {\em  arXiv:1810.02281}, 2018.


\bibitem{AF13} 
M. Athans and  P.L.  Falb. 
 \newblock Optimal Control: An Introduction to the Theory and Its Applications. 
  \newblock {\em Courier Corporation}, Chelmsford,  2013. 


\bibitem{Bi06}
C. M. Bishop 
 \newblock Pattern recognition and machine learning.
 \newblock {\em Information Science and Statistics}, Springer,  2006. 

\bibitem{Br75} 
A.E. Bryson.
 \newblock Applied Optimal Control: Optimization, Estimation and Control. 
  \newblock {\em CRC Press}, Boca Raton, 1975.

\bibitem{BCD97}
M. Bardi and I. Capuzzo-Dolcetta. 
 \newblock  Optimal Control and Viscosity Solutions of Hamilton-Jacobi-Bellman Equations.
  \newblock {\em Birkhauser}, 1997.

 \bibitem{BCN18}
 L.  Bottou, F. E. Curtis, and  J. Nocedal.
 \newblock Optimization methods for large-scale machine learning.
  \newblock {\em SIAM Review}, 60(2): 223--311, 2018.
  
\bibitem{Be09}
 Y. Bengio.
 \newblock   Learning deep architectures for AI. 
   \newblock {\em Found. Trends Mach. Learn.},  2(1):1--127, 2009.  

\bibitem{Ba93}
A. R. Barron. 
  \newblock  Universal approximation bounds for superpositions of a sigmoidal function. 
  \newblock {\em IEEE Transactions on Information theory}, 39(3):930--945, 1993.

\bibitem{BC97}
M. Bardi and D. I. Capuzzo.
 \newblock Optimal Control and Viscosity Solutions of Hamilton-Jacobi Equations.
  \newblock {\em Birk\"{a}user}, Boston, 1997. 

\bibitem{BGKP18} 
H. B\"{o}lcskei,  P. Grohs,  G. Kutyniok, and P. Petersen. 
  \newblock Optimal approximation with sparsely connected deep neural networks. 
  \newblock {\em https://arxiv.org/pdf/1705.01714.pdf},  May 17, 2018. 
  
\bibitem{CMHRBM18} 
B. Chang, L.  Meng, E.  Haber, L. Ruthotto, D. Begert,  and E. Holtham. 
 \newblock Reversible architectures for arbitrarily deep residual neural networks. 
  \newblock {\em AAAI 2018}; arXiv:1709.03698. 

 \bibitem{Cet16} 
 P.  Chaudhari, A.  Choromanska, S. Soatto, Y. LeCun, C. Baldassi, C. Borgs, J.  Chayes, L.  Sagun, and R. Zecchina.  
  \newblock Entropy-SGD: Biasing gradient descent into wide valleys. 
   \newblock {\em ArXiv:1611.01838.},  2016.
  
\bibitem{Cy89}
G. Cybenko. 
  \newblock  Approximation by superpositions of a sigmoidal function. 
  \newblock {\em Mathematics of control, signals and systems}, 2(4):303--314, 1989.
  
  
\bibitem{CHHTB18} 
B. Chang, L. Meng, E. Haber, F.  Tung and D. Begert.
 \newblock Multi-level residual networks from dynamical systems view. 
  \newblock {\em In: Proceedings of International Conference on Learning Representations}, 2018.
  
   \bibitem{CL82} 
 F.  L. Chernousko  and  A.  A.  Lyubushin.   
   \newblock Method  of  successive  approximations  for solution of optimal control problems.
   \newblock {\em  Optimal  Control  Applications  and  Methods}, 3(2):101--114, 1982.
 
  \bibitem{DHS11}
J. Duchi, E. Hazan, and Y. Singer.
 \newblock  Adaptive subgradient methods for online learning and stochastic optimization.
 \newblock {\em Journal of machine learning research}, 12: 2121--2159, 2011.
     
    
  \bibitem{WE17}
W. E.  
 \newblock  A proposal on machine learning via dynamical systems. 
  \newblock {\em  Comm. in Math. Sci.},  5(1):1--11, 2017. 

\bibitem{EHL19}
W. E,  J. Han and Q. Li. 
 \newblock  A mean-field optimal control formulation of deep learning. 
  \newblock {\em  Res. Math Sci.},   6:10, 2019. 
  
\bibitem{EW18}
W. E and Q. Wang
  \newblock  Exponential convergence of the deep neural network approximation for analytic functions.
    \newblock {\em https://arxiv.org/pdf/1807.00297.pdf}, 2018.
    
 \bibitem{FR75} 
W. H. Fleming and R. W. Rishel. 
  \newblock  Deterministic and Stochastic Control. 
  \newblock  {\em Springer}, 1975.


\bibitem{GPEB19}
P. Grohs, D. Perekrestenko, D. Elbr\"{a}chter, and H. B\"{o}lcskei. 
  \newblock Deep neural network approximation theory.
 \newblock {\em https://arxiv.org},  2019.

\bibitem{GBB11}
X. Glorot, A. Bordes, and Y. Bengio. 
 \newblock  Deep sparse rectified neural networks. 
   \newblock {\em In International Conference on Artificial Intelligence and Statistics}, 315--323, 2011.
   
 \bibitem{GBC16} 
I. Goodfellow, Y.  Bengio, and  A. Courville.
 \newblock   Deep Learning. 
  \newblock {\em MIT Press}, Cambridge, 2016.  
  
\bibitem{He16a}
K. He, X. Zhang, S. Ren, and J. Sun. 
  \newblock  Deep residual learning for image recognition. 
  \newblock {\em  In Proceedings of the IEEE conference on computer vision and pattern recognition},  
770--778, 2016.

\bibitem{He16b}
K. He, X. Zhang, S. Ren, and J. Sun. 
  \newblock  Identity mappings in deep residual networks. 
  \newblock {\em  arXiv preprint}, arXiv:1603.05027, 2016.  

 \bibitem{Ho89}
K. Hornik. 
\newblock Multilayer feedforward networks are universal approximators. 
 \newblock {\em Neural Networks}, 2:359--366, 1989. 

 \bibitem{HR17} 
E. Haber and L.  Ruthotto. 
 \newblock Stable architectures for deep neural networks. 
  \newblock {\em Inverse Probl.},  34(1): 014004, 2017.

 \bibitem{Ka16}
   K. Kawaguchi. 
     \newblock  Deep learning without poor local minima. 
    \newblock {\em  In Advances in Neural Information Processing Systems},  586--594, 2016.

\bibitem{Kh96}
 H. K. Khalil. 
\newblock  Nonlinear Systems.
\newblock  {\em Pearon}, 3rd edition,  2014.

  
 \bibitem{KB15}
 D. P. Kingma and J. Ba.
  \newblock Adam: A method for stochastic optimization.
    \newblock  {\em In Proceedings of ICLR}, 2015.


 \bibitem{LCTE18} 
Q. Li,  L. Chen, C. Tai, and W.  E. 
 \newblock Maximum principle based algorithms for deep learning. 
   \newblock {\em J. Mach. Learn. Res.},  18:1--29, 2018.
   
\bibitem{Le88} 
LeCun, Y.
 \newblock  A theoretical framework for back-propagation. 
  \newblock {\em In: The Connectionist Models Summer School},  21--28, 1988.

\bibitem{Lu17}
Z. Lu, H. Pu, F. Wang, Z. Hu, and L. Wang. 
  \newblock  The expressive power of neural networks: A view from the width. 
  \newblock {\em In Advances in Neural Information Processing Systems}, 6232--6240, 2017.

\bibitem{Lu18}
Y. Lu, A. Zhong, Q. Li, and B. Dong. 
  \newblock  Beyond finite layer neural network: bridging deep architects and numerical differential equations. 
  \newblock {\em ICML 2018}.
  
 
\bibitem{LBH15}
Y. LeCun, Y. Bengio, and G.  Hinton. 
\newblock   Deep learning. 
\newblock {\em Nature},  521(7553): 436--444, 2015.

\bibitem{LH18} 
 Q. Li and S. Hao. 
 \newblock  An optimal control approach to deep learning and applications to discrete-weight neural networks.
 \newblock {\em arXiv:1803.01299v2}, June 2018.

\bibitem{LMS16}
 G. Larsson, M. Maire, G. Shakhnarovich. 
 \newblock  FractalNet: ultra-deep neural networks without residuals.
 \newblock {\em ICLR}, 2016.

\bibitem{LP16}
 Q. Liao and T. Poggio. 
 \newblock  Bridging the gaps between residual learning, recurrent neural networks and visual cortex.
 \newblock {\em arXiv preprint}, arXiv:1604.03640, 2016.


\bibitem{LS17} 
Li, Z., Shi, Z.
 \newblock Deep residual learning and PDEs on manifold. 
  \newblock {\em arXiv preprint}  arXiv:1708.05115, 2017.
  
  \bibitem{LZLD17} 
Y. Lu, A. Zhong, Q. Li and B. Dong. 
 \newblock Beyond finite layer neural networks: bridging deep architectures and numerical differential equations. 
 \newblock {\em arXiv preprint} arXiv:1710.10121, 2017.
 
 \bibitem{MG18}
T. Matthew, Y. van Gennip.
  \newblock   Deep limits of residual neural networks.
   \newblock {\em arXiv preprint}, arXiv:1810.11741, 2018.

\bibitem{MV14}
H. Masnadi-Shirazi and N. Vasconcelos.
  \newblock   On the design of loss functions for classification: theory, robustness to outliers, and savageBoost,
   \newblock {\em Statistical Visual Computing Laboratory}, University of California, San Diego, retrieved 6 December 2014.

 \bibitem{Ne13}
 Y. Nesterov. 
    \newblock  Introductory lectures on convex optimization: A basic course.
  \newblock {\em  Springer Science \& Business Media},  Volume 87, 2013.

\bibitem{NH10} 
V. Nair and G. E. Hinton. 
 \newblock  Rectified linear units improve restricted Boltzmann machines. 
   \newblock {\em In Proceedings of the 27th international conference on machine learning (ICML-10)}, 807--814, 2010
   
\bibitem{Pi87}
F. J. Pineda. 
  \newblock  Generalization of back propagation  to recurrent and higher order neural networks. 
  \newblock {\em In Proceedings of IEEE Conference on Neural Information Processing Systems}, 
   Denver, Colorado,  November, 1987. IEEE. 

 
 \bibitem{Po64}
B.T. Polyak.
 \newblock  Some methods of speeding up the convergence of iteration methods.
 \newblock  {\em USSR Computaitonal Mathematics and Mathematical Physics}, 4(5): 1--17, 1964.


\bibitem{Pa92}
A. Pazy. 
 \newblock  Semigroups of Linear Operators and Applications to Partial Differential Equations.
   \newblock {\em (Applied Mathematical Sciences)},  Springer, 1992. 

\bibitem{Po87}
L. S.  Pontryagin. 
 \newblock   Mathematical Theory of Optimal Processes. 
   \newblock {\em CRC Press}, Boca Raton, 1987.
   
    \bibitem{Ra09}
 A. V Rao.  
   \newblock A survey of numerical methods for optimal control.
   \newblock {\em Advances  in  the  Astro-nautical Sciences}, 135(1):497--528, 2009.
 

 \bibitem{Ra17}
P. Ramachandran, B. Zoph, and Q. V. Le. 
 \newblock  Searching for activation functions. 
   \newblock {\em arXiv:1710.05941}, 2017.

\bibitem{Ro76}    
R. Rockafellar. 
  \newblock  Monotone operators and the proximal point algorithm. 
  \newblock {\em SIAM J. Control Optim.},  14: 877--898, 1976.
  
\bibitem{RH18}
L. Ruthotto and E. Haber.
  \newblock   Deep neural networks motivated by partial differential equations.
   \newblock {\em ArXiv prepint ArXiv: 1804.04272}, 2018.



\bibitem{RM51}
H. Robinds and S. Monro.
 \newblock   A stochastic approximation method.
 \newblock  {\em Annals of Mathematical Statistics}, 22: 400--407, 1951.
 
\bibitem{RVCPV04}
 L. Rosasco, E. D. De Vito, A. Caponnetto, M.  Piana, A. Verri.
 \newblock   Are loss functions all the same?
 \newblock {\em Neural Computation}, 16(5): 1063--1076, 2004.
 
     
  \bibitem{SBJ15}  
  W. Su,  S. Boyd, and E. J. Candes.
   \newblock A differential equation for modeling Nesterov's accelerated gradient method: theory and insights.
    \newblock {\em Advances in Neural Information Processing Systems}, 2015.
    
\bibitem{SM17} 
S. Sonoda, N. Murata.
  \newblock  Double continuum limit of deep neural networks. 
  \newblock {\em In: ICML Workshop on Principled Approaches to Deep Learning}, 2017.

\bibitem{Su}
 A.S. Sunder. 
   \newblock  Operators on Hilbert space. 
   \newblock  {\em Springer},  2016.

 \bibitem{SS18} 
 I. Safran and O. Shamir. 
    \newblock  Spurious local minima are common in two-layer Relu neural networks. 
   \newblock {\em  In International Conference on Machine Learning}, 4430--4438, 2018.
   
 \bibitem{TH12}
T. Tieleman and G. Hinton.
 \newblock Lecture 6.5-rmsprop: Divide the gradient by a running average
of its recent magnitude.
 \newblock {\em COURSERA: Neural networks for machine learning}, 4(2): 26--31, 2012.
 
  \bibitem{TSDL18} 
  Y. Tao,  Q. Sun, Q. Du,  and W. Liu.
   \newblock Nonlocal neural networks, nonlocal diffusion and nonlocal modeling.
    \newblock {\em NeurIPS},  2018.
    
   \bibitem{WGGH18} 
  X. Wang,  R. Girshick, A. Gupta,  and K. He.
   \newblock Non-local neural networks.
    \newblock {\em CVPR}, 2018.
 
\bibitem{YPBO18} 
  P. Yin,  M. Pham, A. Oberman,  and S. Osher.
   \newblock Stochastic backward Euler: an implicit gradient descent algorithm for $k$-means clustering.
    \newblock {\em J. Sci. Comput.},  77:1133--1146, 2018.
    
 \bibitem{ZLL17} 
  X. Zhang,  Z. Li, C. Loy.
   \newblock PolyNet: A pursuit of structural diversity in very deep networks.
    \newblock {\em CVPR}, 2017.  

    
  \bibitem{WWJ16} 
  A. Wibisono,  A. Wilson, and M. I. Jordan.
   \newblock A variational perspective on accelerated methods in optimization.
    \newblock {\em Proceedings of the National Academy of Sciences}, 2016.   

\end{thebibliography}

\end{document}